\setlist[enumerate]{leftmargin=.5in}
\setlist[itemize]{leftmargin=.5in}
\crefname{hypothesis}{Hypothesis}{Hypotheses}
\title{Walking into the complex plane to `order' better time integrators}
\author{Jithin D. George\thanks{Department of Engineering Sciences and Applied Mathematics, Northwestern University, Evanston, IL.\\
(\email{jithindgeorge93@gmail.com}, \email{samyjung@outlook.com}, \email{niallmm@gmail.com})}
\and Samuel Y. Jung \footnotemark[1]
\and Niall M. Mangan\footnotemark[1]}
\newenvironment{mat}{\left[ \begin{array}{ccccccccccccc}}{\end{array}\right]}
\newcommand\bcm{\begin{mat}}
\newcommand\ecm{\end{mat}}
\newcommand{\at}[2][]{#1|_{#2}}
\DeclareDocumentCommand\rootedtree{o}{\Forest{rooted tree [#1]}}
\begin{document}
\maketitle

\begin{abstract}
Most numerical methods for time integration use real time steps. Complex time steps provide an additional degree of freedom, as we can select the magnitude of the step in both the real and imaginary directions.  By time stepping along specific paths in the complex plane, integrators can gain higher orders of accuracy or achieve expanded stability regions.  We show how to derive these paths for explicit and implicit methods, discuss computational costs and storage benefits, and demonstrate clear advantages for complex-valued systems like the Schrodinger equation. We also explore how complex time stepping also allows us to break the Runge-Kutta order barrier, enabling 5th order accuracy using only five function evaluations for real-valued differential equations.
\end{abstract}

\begin{keywords}
  complex time steps, complex time integrators, high order methods, absolute stability, order barrier
\end{keywords}

\begin{AMS}
  30-08, 65E05, 65L05, 65L04, 65M12, 65M20
\end{AMS}

\section{Introduction}

The real line is a tiny sliver in the whole of the complex plane. Wandering into the complex plane has often been employed to solve real-world problems. Two direct examples are the solution of complicated real-valued integrals via contour integrals \cite{brown2009complex} and the usage of the intuition gained from poles in the complex plane in control-theory applications \cite{nise2020control}. The complex plane provides wonderful insights to real problems, allowing mathematicians to impact practical applications.  One of the motivations for this paper was a simple numerical application  highlighted in a 2018 SIAM News article \cite{high18-diff} where Nick Higham describes how taking a numerical complex derivative neatly gets rid of round-off error. However, most methods for numerically solving a differential equation from time $a$ to time $b$, construct a path utilizing only the real line. We show that we can use the complex plane to our advantage when it comes to numerical time integration by taking time steps in the complex plane. 

This paper is not the first to talk about complex time steps. Early work on time stepping in the complex plane focused on systematically avoiding singularities during numerical integration of singular differential equations \cite{corliss1980integrating}. A relatively common use of complex time steps is in the field of operator splitting methods.  One of the earliest works in this field is \cite{chambers2003symplectic}, where Chambers explores operator splitting in symplectic integrators using complex substeps. Taking real substeps often results in negative substeps and substeps larger than the original step causing the symplectic integrator go back and forth along the direction of integration. By using complex substeps, Chambers showed how to avoid negative and unstable substeps, significantly reducing the truncation error. Chambers's work inspired others to consider complex time steps and complex coefficients as part of operator splitting methods for various parabolic and nonlinear evolution problems \cite{castella2009splitting,auzinger2017practical,casas2021compositions,blanes2010splitting, hansen2009high, casas2021high}. Complex coefficients have found uses in construction of numerical schemes for stiff differential algebraic systems \cite{al2006rosenbrock} and for Runge-Kutta-Nystrom methods \cite{gurkan2012fifth}. More recently, \cite{buvoli2019constructing} used Cauchy's integral formula as inspiration to introduce a general framework for deriving integrators using interpolating polynomials in the complex plane. This results in parallel Backward Differentiation Formula (BDF) methods with improved stability regions.

In this paper, we ask fundamental questions about the utility of numerical time-stepping in the complex plane, including: `What paths lead to the least error?' and `Are there general strategies and trade offs for complex time-stepping?'. Complex time steps have been previously studied in a similar manner to our work in \cite{filatov2006complex} where Filatov derives complex paths for linear differential equations where the Euler method gains higher orders of accuracy. In a similar direction, \cite{orendt2009geometry} uses linear analysis to show how taking a complex contour with specific roots of unity as time steps, allows Runge-Kutta methods to gain superconvergence, increasing the order of the method by one as the number of steps go to infinity. In our work, we go beyond the simple linear problem and demonstrate how to systematically derive higher-order paths with Runge-Kutta integrators for general nonlinear vector-valued differential equations.

We show that stepping into the complex plane provides an extra dimension that can be exploited to improve accuracy for general differential equations and design expanded stability regions (beyond what state of the art Runge-Kutta methods can do). We show how complex time integrators can circumvent the Runge-Kutta order barrier \cite{Butcher:2007, butcher2009fifth} and discuss the increased computational cost from complex operations.  The code to reproduce the numerical experiments in this paper is available on GitHub \footnote{\url{https://github.com/Dirivian/complex_time_integrators}} .

\section{Taking complex time steps with the Euler method}


  The idea of a complex time step can be unusual and unintuitive. To make it accessible, we first illustrate simple paths with complex time steps and demonstrate that applying the forward Euler method along the complex paths improves accuracy compared to forward Euler with real steps. The paths we choose in the complex plane must intersect the real line at the time points where we desire a solution. Our primary question is `what is the optimal n-step path in the complex plane over which to integrate between two real time points?'

To illustrate the simplest possible case, suppose we want to solve the equation $\dot{y} = \lambda y$ with the initial condition $y_0$ at time $t=0$. To find the solution $y(\Delta t)$, we can either take two real time steps of length $\frac{\Delta t}{2}$ or take a complex time step of length $\frac{\Delta t}{2} + i\frac{\Delta t}{2}$ followed by a complex time step of length $\frac{\Delta t}{2} - i\frac{\Delta t}{2}$ returning back to the real line. These two paths are shown in Fig. \ref{fig_1} (top). We numerically integrate the differential equations $\dot{y} = y$ along these paths as shown in Fig. \ref{fig_1} (bottom). By eye, the complex time path clearly does much better than the real time path in this linear case. The improvement with a complex path can hold for nonlinear ODEs, second order ODEs, non-autonomous ODEs (Fig. \ref{airyetall}) and nonlinear PDEs, such as the viscous Burgers equation (Fig. \ref{burgers}). In section \ref{higherordersection}, we describe the generalizations needed to solve nonlinear equations, but first we provide theoretical insight into the simple linear case.

\begin{figure}
\label{fig_1}
\begin{centering}
\includegraphics[width=5.3in]{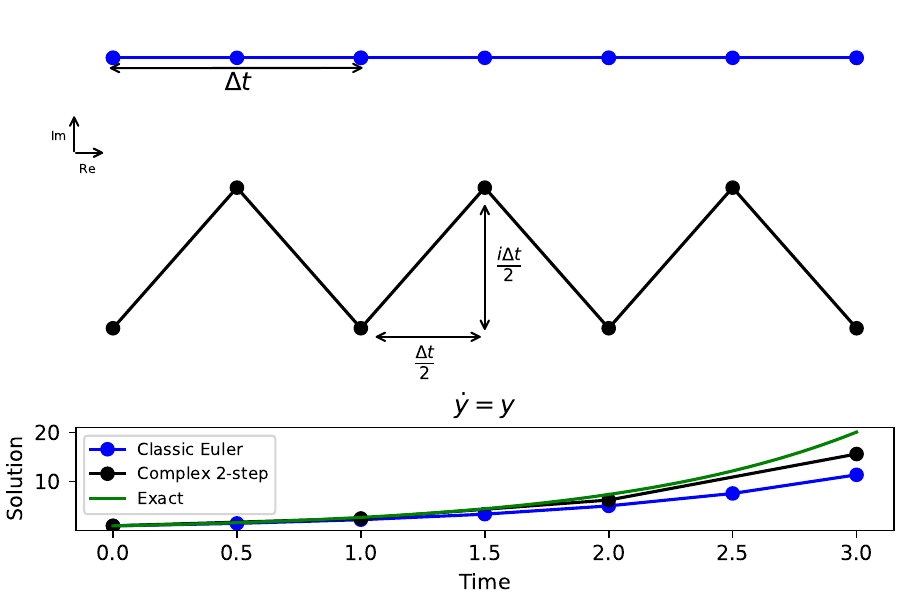}
\caption{Taking complex time steps that return to the real line results in increased accuracy.}
\end{centering}
\end{figure}

\begin{figure}
\label{airyetall}
\begin{centering}
\includegraphics[width=5.2in]{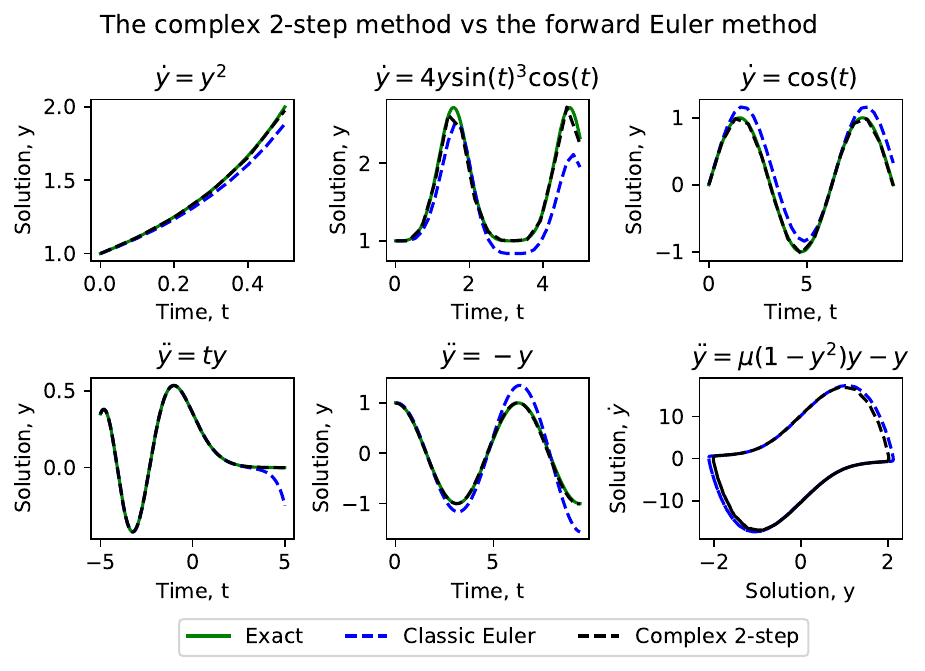}
\caption{Comparison of real-valued classic Euler method and 2-step complex-valued Euler method. The number of real timepoints for the classic scheme are roughly double, $2n-1$, the number for the 2-step complex scheme so that the net number of timesteps and function evaluations are the same for both schemes.}
\end{centering}
\end{figure}

\begin{figure}
\label{burgers}
\begin{centering}
\includegraphics[width=5.1in]{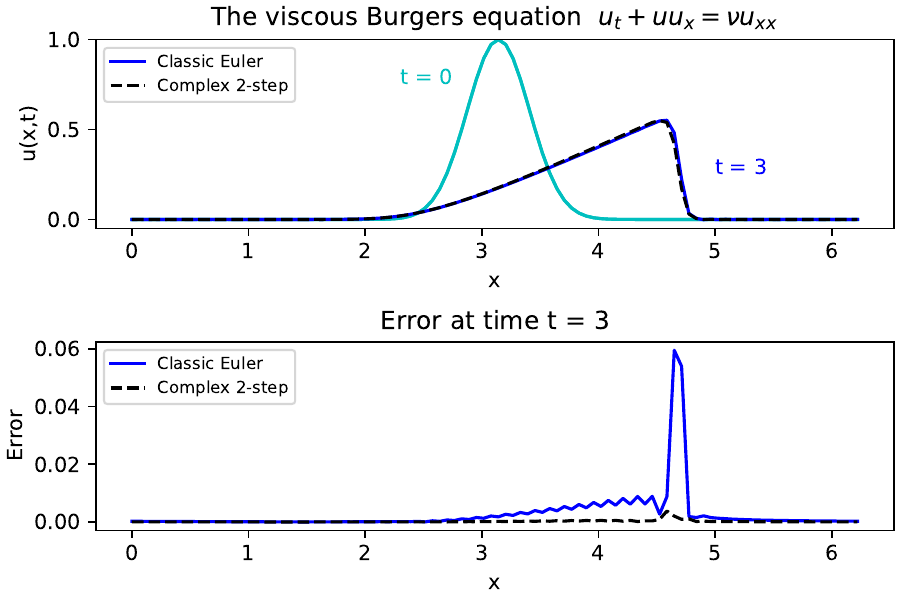}
\caption{The complex time steps reduce the error for the nonlinear viscous Burgers equation.}
\end{centering}
\end{figure}

Why does time stepping on the complex path improve accuracy of the numerical solution so significantly? To answer that, we analyze the linear problem $\dot{y} = y$ with initial condition $y(t_0) = y_0$. Consider time stepping from the initial condition $y_0$ at time $t_0$ to time $t+\Delta t$ using 2 real time steps. The discrete values of $y(t)$ are given by the following at $t = t_0, \; t_0+\frac{\Delta t}{2},$ and $t_0 + \Delta t$:

\begin{align}
    y(t_0) &= y_0 \nonumber\\
    y(t_0 + \frac{\Delta t}{2}) &= y_0 \bigg(1+\frac{\Delta t}{2} \bigg)\nonumber\\
     y(t_0 + \Delta t) &= y_0\bigg(1+\frac{\Delta t}{2}\bigg)\bigg(1+\frac{\Delta t}{2}\bigg) = y_0 \bigg(1 + \Delta t + \frac{\Delta t^2}{4}\bigg).
\end{align}

Now consider timestepping from $y_0$ at time $t_0$ to time $t+\Delta t$ using the two complex time steps.

\begin{align}
    y(t_0) &= y_0 \nonumber\\
    y\Big(t_0 + \frac{\Delta t}{2} +i\frac{\Delta t}{2}\Big) &= y_0 \bigg(1+\frac{\Delta t}{2}+i\frac{\Delta t}{2} \bigg)\nonumber\\
     y(t_0 + \Delta t) &= y_0\bigg(1+\frac{\Delta t}{2}+i\frac{\Delta t}{2}\bigg)\bigg(1+\frac{\Delta t}{2}-i\frac{\Delta t}{2}\bigg ) = y_0 \bigg(1 + \Delta t + \frac{\Delta t^2}{2}\bigg)
\end{align}

The exact solution to the problem is given by the Taylor expansion
\begin{align}
    y(t_0 +\Delta t) = y_0 e^{\Delta t} = y_0\Big(1+\Delta t + \frac{\Delta t^2}{2} + \hdots \Big).
\end{align}

The secret behind the higher accuracy of the complex 2-step method is that the polynomial obtained through the numerical approximation matches the Taylor series of the exact solution to the second order term, making forward Euler method gain second order accuracy. This is one of two unique 2-step paths in the complex plane which has this property. The other unique path uses the other permutation of the complex time steps $(\frac{\Delta t}{2} -i\frac{\Delta t}{2}, \frac{\Delta t}{2} +i\frac{\Delta t}{2})$ instead of $(\frac{\Delta t}{2} +i\frac{\Delta t}{2}, \frac{\Delta t}{2} -i\frac{\Delta t}{2})$. Both permutations of these steps will lead to the same polynomial with coefficients that match the Taylor expansion to 2nd order. This 2-step 2nd order complex Euler path originally appeared in \cite{filatov2006complex} where the author derives it as a higher-order path for linear differential equations. Later in Subsection \ref{subsec:nonlinearde}, we show how this path satisfies 2nd-order accuracy for general nonlinear differential equations.

By stepping into the complex plane, we gain an extra degree of freedom in the coefficients that allows us to design time-steps with improved accuracy. This extra dimension can be used to construct higher order paths for linear and nonlinear differential equations (Section \ref{higherordersection}), circumvent the order barrier for Runge-Kutta methods (Section \ref{sec:orderbarrier})  and even design improved stability regions (Section \ref{stabilitysection}). In the next section, we demonstrate how to extend the linear example to construct paths with higher order accuracy.

\section{Constructing a higher order time integrator} \label{higherordersection}

Is it possible then to construct 3-step path on which forward Euler would be 3rd order accurate? Would it be nth order accurate on an n-step complex path?
To build intuition for the simplest higher order construction, we continue analyzing the linear problem in Subsection \ref{subsec:linearFEpaths} and prove that achieving higher order requires complex time-stepping. We then  demonstrate how to construct implicit methods for linear problems in Subsection \ref{subsec:linearimplicitpaths}. Later in Subsections \ref{subsec:nonlinearde}, \ref{subsec:implicit_nonlinear} and  \ref{subsec:nonlineardeRK}, we extend the construction of complex time-steppers to the general nonlinear problem.

\subsection{Higher order integrators for linear differential equations using complex Euler steps} \label{subsec:linearFEpaths}

In this subsection, we show how to construct a third-order time integrator for the differential equation $\dot{y} = \lambda y$ by taking 3 forward Euler time steps of variable sizes in the complex plane that return to the real line at every desired time point.

If we take the three complex steps $w_1 \Delta t, w_2 \Delta t$ and $w_3 \Delta t$ to timestep from $y_0$ at $t_0$ to $t_0 +\Delta t$, the approximate numerical solution $\bar{y}$ at time $t +\Delta t$ is given by

\begin{align}
    \bar{y}(t_0 +\Delta t) = y_0(1+w_{1} \lambda \Delta t)(1+w_{2} \lambda \Delta t)(1+w_3\lambda \Delta t)  
\end{align}
where the $w_{i}$'s are complex variables. Collecting $\lambda \Delta t$ terms results in 
\begin{align}
\label{eq:linearcoeff}
\bar{y}(t_0 +\Delta t) 
&= y_0(1 + (w_1 +w_2 +w_3)\lambda \Delta t + (w_1w_2 + w_2w_3 + w_1w_3) \lambda^2 \Delta t^2  + w_1w_2w_3 \lambda^3 \Delta t^3).
\end{align}
The exact solution of the linear problem is given by 
\begin{align}
\label{eq:expcoeff}
y(t_0 +\Delta t)= y_0e^{\lambda \Delta t} = y_0(1 + \lambda \Delta t + \frac{1}{2} \lambda^2 \Delta t^2 +\frac{1}{6} \lambda^3 \Delta t^3 + \hdots )    
\end{align}

For the numerical approximation to be third-order accurate, the coefficients of $y_0(\lambda \Delta t)^n$ for $n=1,2,3$ in equations \eqref{eq:linearcoeff} and \eqref{eq:expcoeff} need to match. This results in the following order conditions:

\begin{align}
\label{nonlinearsystem1}
    w_1 + w_2 +w_3 &= 1 \nonumber\\
    w_1w_2 + w_2w_3 + w_1w_3 &= \frac{1}{2} \nonumber\\
    w_1w_2w_3 &= \frac{1}{6}.
\end{align}

One of the solutions to the above nonlinear system is 
\begin{align}
    (w_1, w_2, w_3) = (0.186731 + 0.480774 i, 0.626538, 0.186731 - 0.480774i).
\end{align}
However, as the nonlinear system \ref{nonlinearsystem1} is symmetric about $w_1, w_2$ and $w_3$, all 6 permutations of $(0.186731 + 0.480774 i, 0.626538, 0.186731 - 0.480774i)$ result in unique 3-step 3rd order complex time stepping paths. In Fig \ref{fig:pathslinear}, all six 3-step 3rd order paths are shown along with the two 2-step 2nd order paths and the single step forward Euler path.

For a general n-step nth order path, the order conditions would be

\begin{align}
    \sum_i w_i &= 1 \nonumber\\
   \sum_{i \neq j} w_iw_j &= \frac{1}{2} \nonumber\\
    \sum_{i \neq j \neq k} w_iw_j w_k &= \frac{1}{6} \nonumber\\
   & \vdots \nonumber\\
  \prod_{i=1}^{n}w_i &=\frac{1}{n!}.
\end{align}

Equivalent formulae for higher-order Euler paths for linear problems appear in \cite{filatov2006complex}, but in a less-compact fractional-step form. For higher orders, solving the nonlinear system of constraints on the coefficients becomes an increasingly tough symbolic task. However, solutions can be found numerically  up to a certain precision and be stored in a library for general use. 

 \begin{figure}[H]
\begin{centering}
\includegraphics[width=5.2in]{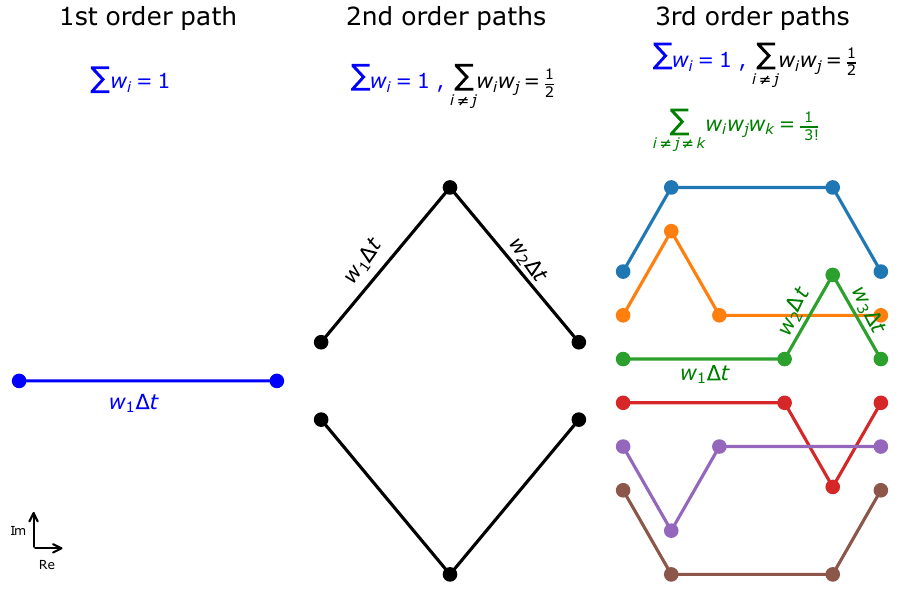}
\caption{All possible first, second, and third-order complex Euler methods for linear differential equations. Each order requires an additional step and adds an additional constraint on the coefficients.}
\end{centering}
\label{fig:pathslinear}
\end{figure}

These results beg the theoretical question: is there any way to achieve similar results with real-valued time-steps? Can one take steps of varying size on the real line and achieve a higher-order time integrator? We prove orders 2 or greater cannot be achieved with real-valued Euler steps.
\begin{theorem}

It is impossible to construct a time integrator with order greater than or equal to 2 using real-valued Euler time steps of variable sizes.
\end{theorem}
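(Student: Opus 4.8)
The plan is to argue directly from the order conditions already derived for the linear test equation $\dot y = \lambda y$. For any sequence of $n$ real Euler substeps of sizes $w_1\Delta t,\dots,w_n\Delta t$, the numerical update is the product expansion \eqref{eq:linearcoeff}, and demanding order at least two means matching the coefficients of $\lambda\Delta t$ and $(\lambda\Delta t)^2$ against the exponential \eqref{eq:expcoeff}. This yields the first two order conditions
\begin{align}
\sum_i w_i &= 1, \\
\sum_{i<j} w_i w_j &= \tfrac{1}{2}.
\end{align}
My goal is to show that these two equations cannot hold simultaneously when every $w_i$ is real, which already rules out order $2$ and hence any higher order, for every $n$ at once.

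The engine of the proof is the elementary symmetric identity
\begin{equation}
\Big(\sum_i w_i\Big)^2 = \sum_i w_i^2 + 2\sum_{i<j} w_i w_j .
\end{equation}
Substituting the two order conditions collapses this to $1 = \sum_i w_i^2 + 1$, so that $\sum_i w_i^2 = 0$. Over the reals, a sum of squares vanishes only when each term does, forcing $w_i=0$ for all $i$ and contradicting $\sum_i w_i=1$. This contradiction establishes the claim.

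Finally I would remark on why the complex plane escapes this trap: over $\mathbb{C}$ the quantity $w_i^2$ need not be nonnegative, so $\sum_i w_i^2 = 0$ is perfectly attainable (indeed the paths constructed earlier realize it), and the single obstruction identified here disappears. I do not anticipate any technical difficulty in the computation; the only point requiring care is the framing, namely verifying that ``order $\ge 2$'' is captured \emph{exactly} by the first two symmetric-function conditions regardless of the number of steps, so that the one identity settles all $n$ simultaneously.
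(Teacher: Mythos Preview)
Your argument is correct and is in fact cleaner than the paper's. Both proofs start from the same two order conditions $\sum_i w_i = 1$ and $\sum_{i<j} w_i w_j = \tfrac{1}{2}$, but the paper proceeds by eliminating $w_n$ via the first condition, then reparametrizing $w_j = k_j w_1$ and examining the discriminant of the resulting quadratic in $w_1$, which it shows equals $-(k_1^2+\cdots+k_{n-1}^2)<0$. Your route bypasses all of this with the single power-sum identity $(\sum_i w_i)^2 = \sum_i w_i^2 + 2\sum_{i<j} w_i w_j$, which immediately forces $\sum_i w_i^2 = 0$ and hence $w_i=0$ for all $i$ over $\mathbb{R}$. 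This is shorter, avoids the implicit assumption $w_1\neq 0$ needed for the paper's reparametrization, and makes the obstruction (nonnegativity of real squares) completely transparent; the paper's discriminant computation is really just a disguised version of the same identity. Your closing remark about why the complex case escapes the obstruction is also a nice touch that the paper only hints at.
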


\begin{proof}
We prove the above theorem by showing that it is impossible to construct a second-order time integrator using forward Euler steps of variable size on the real line.

Suppose we use $n$ time steps of variable size given by $w_1, w_2, \hdots, w_n$. The approximate solution to $\dot{y} = \lambda y$ given by our time integrator $\bar{y}$ would be

\[\bar{y}(\Delta t) = (1+w_1 \lambda \Delta t)(1+w_2 \Delta \lambda t)\hdots(1+w_n \Delta \lambda t)  \]
where we assume the first order condition is satisfied, 
\[w_1 +w_2 + \hdots +w_n = 1.\]
Gathering together different order $\lambda \Delta t$ terms,
\begin{align*}
    \bar{y}(\Delta t) &= 1 + (w_1 +w_2 + \hdots +w_n)\lambda \Delta t +  (w_1w_2+w_1w_3 +\hdots w_{n-1}w_n) \lambda^2 \Delta t^2 +\hdots\\
     &= 1 + \lambda \Delta t +  (w_1w_2+w_1w_3 +\hdots w_{n-1}w_n) \lambda^2 \Delta t^2 +\hdots \\
\end{align*}
where we have substituted in our first order condition. For the time stepper to be second order, we need the coefficient on $\lambda^2 \Delta t^2$to be

\[w_1w_2+w_1w_3 +\hdots+ w_2w_3 + \hdots+ w_1w_n + w_2w_n + \hdots+ w_{n-1}w_n = \frac{1}{2}.\]
Factoring out a $w_n$ results in
\[w_1w_2+w_1w_3 +\hdots+ w_2w_3 + \hdots+  (w_1 +w_2 + \hdots +w_{n-1})w_n = \frac{1}{2}.\]
Solving the first order condition for $w_n$ and incorporating produces
\[w_1w_2+w_1w_3 +\hdots+ w_2w_3 + \hdots+  (w_1 +w_2 + \hdots +w_{n-1})(1- w_1 -w_2  \hdots -w_{n-1}) = \frac{1}{2},\]
which rearranges to
\[w_1 +w_2 + \hdots +w_{n-1} - w_1^2 - w_2^2\hdots -w_{n-1}^2- w_1w_2-w_1w_3 \hdots- w_{n-1}w_{n-2}= \frac{1}{2}.\]
Since all the $w$s are real, we can express them as 
\[w_j = k_j w_1 \quad j = 1, \hdots, n-1\]
where all the $k$s are real. Then, we get
\[(k_1 +k_2 + \hdots +k_{n-1})w_1 - w_1^2(k_1^2 + k_2^2\hdots +k_{n-1}^2+ k_1k_2+k_1k_3 \hdots+ k_{n-1}k_{n-2})= \frac{1}{2}.\]

Rearranging in terms of powers of $w_1$ one has
\[ w_1^2(k_1^2 + k_2^2\hdots +k_{n-1}^2+ k_1k_2+k_1k_3 \hdots+ k_{n-1}k_{n-2})-(k_1 +k_2 + \hdots +k_{n-1})w_1 + \frac{1}{2} = 0.\]

This is quadratic equation to solve for $w_1$ in the form of $ax^2+bx+c = 0$. For $w_1$ to only have real solutions, we need a positive discriminant $\Delta = b^2-4ac \geq 0 $. In  our case, the discriminant, $\Delta$, is

\begin{align*}
    \Delta &= (k_1 +k_2 + \hdots +k_{n-1})^2 - 4 (k_1^2 + k_2^2\hdots +k_{n-1}^2+ k_1k_2+k_1k_3 \hdots+ k_{n-1}k_{n-2}) \frac{1}{2} \nonumber\\ &= - (k_1^2 + k_2^2 + \hdots +k_{n-1}^2) 
\end{align*}
Since the discriminant is always negative, $w_1$ cannot be a purely real time step and must to be complex to satisfy our second order condition. Therefore, not only \textit{can} we achieve higher order integrators through complex time stepping, but we \textit{must} step into the complex plane to achieve higher order.

\end{proof}

\subsection{Higher order implicit integrators for linear differential equations using complex Euler steps} \label{subsec:linearimplicitpaths}

So far, we have managed to find 3rd order paths for time stepping linear ODEs and PDEs using explicit Euler steps. When dealing with stiff systems, it becomes necessary to use integrators that allow for large stable steps. Explicit methods usually have narrow stability regions. Hence, implicit methods are often preferred for stiff problems because they are often A-stable (their stability regions contain the entire left half plane) and L-stable (their solution goes to zero in a single step as the step size goes to infinity). We now demonstrate how to extend the analysis for linear problems to implicit methods by constructing a 2 step path using the implicit midpoint method that is 4th order accurate.

Consider the linear equation $\dot{y} = \lambda y$. Applying the implicit midpoint method to this equation produces the following difference equation,

\begin{align}
    y_{n+1} = y_n + \frac{\lambda \Delta t}{2} (y_n + y_{n+1})
\end{align}
which can also be written as
\begin{align}
    y_{n+1} = y_n\frac{1+\lambda \Delta t/2}{1-\lambda \Delta t/2}. 
\end{align}

Now, suppose we take two complex steps $w_1 \Delta t$ and $w_2 \Delta t$ to go from $y_n$ to $y_{n+1}$. Our numerical approximation at $y_{n+1}$ is

\begin{align}
\label{impcomplex}
    y_{n+1} = y_n\frac{(1+\lambda w_1 \Delta t/2)(1+\lambda w_2 \Delta t/2)}{(1-\lambda w_1 \Delta t/2)(1-\lambda w_2 \Delta t/2)}. 
\end{align}

If $y(t) = y_n$, the true solution at time $t+\Delta t$ would be $y_ne^{ \lambda \Delta t}$. This solution can be represented to $4$th order accuracy by the following rational function expansion of the exponential function, known as the (2,2) Pade approximant \cite{baker1996pade},

\begin{align}
\label{padeimp}
    y_{t + \Delta t} = y_n\frac{1+\lambda \Delta t/2 +\lambda^2 \Delta t^2/12}{1-\lambda \Delta t/2 +\lambda^2 \Delta t^2/12} + O(\Delta t^5).
\end{align}

Setting the right hand sides of \eqref{impcomplex} and \eqref{padeimp} equal to each other, gives us the 2-step path in the complex plane on which the implicit midpoint method has $4$th order accuracy.

By solving 
\begin{align}
    w_1 + w_2 = 1 \quad, \quad w_1w_2 = \frac{1}{3} 
\end{align}
we find that the two 2-step 4th order paths are $(\frac{1}{2} + i \frac{1}{2 \sqrt{3}}, \frac{1}{2} - i \frac{1}{2 \sqrt{3}})$ and  $(\frac{1}{2} - i \frac{1}{2 \sqrt{3}}, \frac{1}{2} + i \frac{1}{2 \sqrt{3}})$.

Equations \eqref{impcomplex} and \eqref{padeimp} highlight a clear connection between the optimal k-step complex path for the implicit midpoint method and the (k,k) Pade approximant.

In a similar way, we can derive the optimal k-step complex path for the Backward Euler method  by setting it equal to the (0,k) Pade approximate. If we take 3 complex step with the Backward Euler method before returning to the real line, the numerical solution is given by
\begin{align}
    y_{n+1} = \frac{y_n}{(1-w_1\lambda\Delta t)(1-w_2\lambda\Delta t)(1-w_3\lambda\Delta t)}.
\end{align}
The exact solution can be approximated to 3rd order accuracy by the (0,3) Pade approximant.
\begin{align}
\label{padeback}
    y_{t + \Delta t} = \frac{y_n}{1 - \lambda \Delta t +\lambda^2 \Delta t^2/2 +\lambda^3 \Delta t^3/6} + O(\Delta t^4).
\end{align}

This results in the same equations and paths as derived in Subsection \ref{subsec:linearFEpaths} for the forward Euler method.

We test out our newfound implicit paths on the heat equation with Dirichlet boundary conditions.
\begin{align}
    u_t = u_{xx}, \quad  x \in (0,1)
\end{align}

We choose the initial condition $u(x,0) = \sin(\pi x)$, which has the exact solution  $u(x,t) = e^{-\pi^2 t}\sin(\pi x)$. The spatial discretization is performed using 4th order finite differencing, periodic boundary conditions and a grid size of 10000 cells. As expected, the complex 3-step backward Euler method has 3rd order accuracy and the 2-step complex implicit midpoint method has 4th order accuracy (Fig. \ref{implicit_linear}).
      \begin{figure}
\begin{centering}
\includegraphics[width=5in]{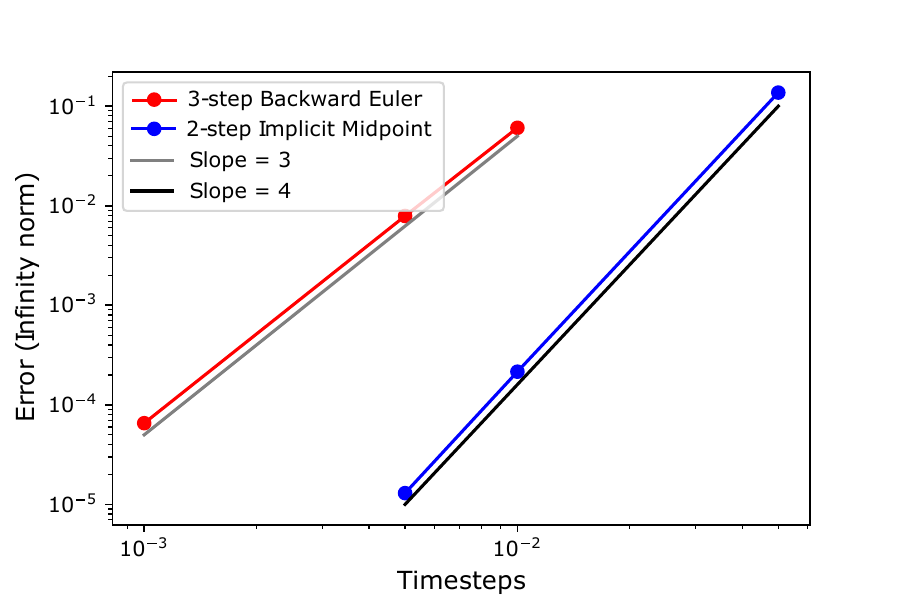}
\caption{The 2-step implicit midpoint method has 4th order accuracy while the 3-step Backward Euler has 3rd order accuracy for the heat equation.}
 \label{implicit_linear}
\end{centering}
\end{figure}

\subsection{Higher order explicit integrators for general nonlinear non-autonomous differential equations}\label{subsec:nonlinearde}

In the previous subsections, we showed how to construct a higher order complex path on which to time integrate using forward Euler, backward Euler and implicit midpoint steps. We experimentally demonstrated the expected order of accuracy for an example set of linear ordinary differential equations and partial differential equations. However, the derivation assumes a linear problem and does not guarantee the same order of accuracy for nonlinear problems. Indeed, when we tested the complex time-stepping schemes on nonlinear and non-autonomous equation, the 3 step forward Euler paths achieved only second-order accuracy. To achieve 3rd order accuracy for nonlinear, non-autonomous equations, we generalize the previously described procedure.

A general nonlinear differential equation is given by

\begin{align}
    \dot{y} = f(t,y), \quad y(t_0) = y_0.
\end{align}

Not every real-valued differential equation can be extended into in the complex plane. For the theoretical solution of a differential equation at a particular time to be the same along a path on the real line and  a path in the complex plane, the right hand side of the differential equation needs to be analytic over the domain enclosed by the two paths. Furthermore, some differential equations may only be defined over the real line. For example, for the differential equation $\dot{y} = \text{max}(1,y)$, the $\text{max}$ function is defined only for real arguments. For this reason, this paper only considers differential equations with natural extensions in the complex plane.
Let us also assume the function $f$ is sufficiently analytic near the region of integration i.e it does not have any singularities close to the path of integration. At time $t = t_0 + \Delta t$, we have

\begin{align}
\label{taylornonlinexp}
    y(t_0 + \Delta t) = y_0 + \Delta t f(t_0, y_0) + \frac{1}{2} \Delta t^2 \dot{f}(t_0,y_0) + \frac{1}{3!} \Delta t^3 \ddot{f}(t_0,y_0)  + \hdots
\end{align}

where
\begin{align}
    \dot{f}(t,y) = \frac{d}{dt}f(t,y)\at[\Bigg]{t=t_0, y = y_0} = f{\left(t_0,y_0 \right)} \frac{\partial}{\partial y} f{\left(t,y \right)}\at[\Bigg]{t=t_0, y = y_0} + \frac{\partial}{\partial t} f{\left(t,y \right)\at[\Bigg]{t=t_0, y = y_0}}.
\end{align}

Expanding equation \ref{taylornonlinexp}, we get

\begin{align}
\label{nonlintay}
    y(t_0+\Delta t) = &y_0 + \Delta t f(t_0, y_0) + \frac{1}{2} \Delta t^2 \bigg( f{\left(t,y \right)} \frac{\partial}{\partial y} f{\left(t,y \right)} + \frac{\partial}{\partial t} f{\left(t,y \right)}
\bigg)\at[\Bigg]{t=t_0, y = y_0}  \nonumber \\
    & + \frac{1}{3!} \Delta t^3 \bigg(f^2{\left(t,y \right)} \frac{\partial^{2}}{\partial y^{2}} f{\left(t,y \right)} + \left(\frac{\partial}{\partial y} f{\left(t,y \right)}\right)^{2} f{\left(t,y \right)}\nonumber \\ &+ 2f{\left(t,y \right)} \frac{\partial^{2}}{\partial y\partial t} f{\left(t,y \right)} + \frac{\partial}{\partial t} f{\left(t,y \right)} \frac{\partial}{\partial y} f{\left(t,y \right)} + \frac{\partial^{2}}{\partial t^{2}} f{\left(t,y \right)} \bigg)\at[\Bigg]{t=t_0, y = y_0}\\
    &+ \hdots  \nonumber 
\end{align}

Now, consider a complex time stepper that takes 3 complex time steps $w_1 \Delta t, w_2 \Delta t, w_3 \Delta t $ starting from $y_0$ to go to $y_1, y_2, y_3$. These steps are are given by:

\begin{align}
\label{nonlintay1}
    y_0 &= y(t_0)\\ 
    y_1 &= y_0 + w_1 \Delta t f( t_0, y_0)\\ 
        y_2 &= y_1 + w_2 \Delta t f( t_0 + w_1 \Delta t, y_1)\\ 
    y_3 &= y_2 + w_3 \Delta t f( t_0 + w_1 \Delta t+ w_2 \Delta t, y_2) \label{nonlintay4}.
\end{align}

One can expand $f(t_n, y_n)$ around $f(t_0,y_0)$ using the multivariate Taylor series. Doing so,

\begin{align}\label{3comptay}
    y_3 = &y_0 + \Delta t (w_{1}+w_{2}+w_{3}) f{\left(t_0,y_0 \right)} \nonumber\\&+  \Delta t^{2} (w_1 w_2 + w_1 w_3 +w_2 w_3)\bigg(f{\left(t,y \right)} \frac{\partial}{\partial y} f{\left(t,y \right)} +  \frac{\partial}{\partial t} f{\left(t,y \right)}  \bigg)\at[\Bigg]{t=t_0, y = y_0} \nonumber\\&
    +  \Delta t^{3} \bigg( w_{1} w_{2} w_{3} \bigg(f{\left(t,y \right)} \left(\frac{\partial}{\partial y} f{\left(t,y \right)}\right)^{2} + \frac{\partial}{\partial t} f{\left(t,y \right)} \frac{\partial}{\partial y} f{\left(t,y \right)}+ \frac{\partial}{\partial t} f{\left(t,y \right)} \frac{\partial}{\partial y} f{\left(t,y \right)}\bigg) \nonumber\\& + (w_{1}^{2} w_{2}+ w_{1}^{2} w_{3}+ 2w_1 w_2 w_3 + w_{2}^{2} w_{3}) \bigg(\frac{1}{2} f^{2}{\left(t,y \right)} \frac{\partial^{2}}{\partial y^{2}} f{\left(t,y \right)}  \nonumber\\& + \frac{1}{2}\frac{\partial^{2}}{\partial t^{2}} f{\left(t,y \right)}+f{\left(t,y \right)} \frac{\partial^{2}}{\partial y\partial t} f{\left(t,y \right)}  \bigg)\nonumber \bigg)\at[\Bigg]{t=t_0, y = y_0} + O\left(\Delta t^{4}\right).
\end{align}

For this three step complex integrator to have 3rd order accuracy, the coefficients of the partial derivatives in  \ref{3comptay} should match that of equation \ref{nonlintay}. Equating coefficients, we get this system of nonlinear equations.

\begin{align}
    \label{syseq}
    w_1 +w_2 +w_3 = 1 \nonumber\\
    w_1w_2 + w_1w_3 + w_2w_3 = \frac{1}{2} \nonumber\\
       w_1 w_2 w_3 = \frac{1}{6} \nonumber\\
        w_{1}^{2} w_{2} +  w_{1}^{2} w_{3}  + 2 w_{1} w_{2} w_{3}  +  w_{2}^{2} w_{3}  = \frac{1}{3}
\end{align}

The first three equations are the same as those derived for the linear system equation \ref{nonlinearsystem1}, and the 4th equation arises from the nonlinear terms in the multivariate expansion. Unfortunately, this system is overconstrained and does not have a solution, implying that we need more than three steps for 3rd order accuracy. However, for systems with only real solutions, we can take further advantage of the degree of freedom in the imaginary part of the step and find many 3-step paths with 3rd order accuracy. If we allow the  truncation error beyond $O(\Delta t)$ to be purely imaginary, we can remove this imaginary error by taking only the real part of the numerical solution at the end of the 3-step sequence.
In order to find a 3-step 3rd order path for real-valued differential equations, let us allow the truncation error at the order of $\Delta t^3$ to be purely imaginary,  Let $y$ be the true solution and $\bar{y}$ be the approximate numerical solution. Then, with our setup, we would have

\begin{align}
    y(t+\Delta t) = \bar{y}(t+\Delta t)+ ik\Delta t^3  + O(\Delta t^4)
\end{align}
where $k$ is real.
Taking the real part on both sides, we get
\begin{align}
    \textbf{Re}(y(t+\Delta t)) = \textbf{Re}(\bar{y}(t+\Delta t))  + O(\Delta t^4).
\end{align}
If the true solution to our differential equation and its initial condition are known to be real-valued at real time points, then we have $y(t+\Delta t) =  \textbf{Re}(y(t+\Delta t))$ and taking the real part of the solution at the end of complex path results in 3rd order accurate numerical solution. This trick of enforcing purely imaginary error at $O(\Delta t^3)$ produces a system with fewer constraints on the coefficients. The coefficients must satisfy the following relaxed conditions:

\begin{align}
    \label{syseq2}
    w_1 +w_2 +w_3 = 1 \nonumber\\
    w_1w_2 + w_1w_3 + w_2w_3 = \frac{1}{2} \nonumber\\
       \textbf{Re}(w_1 w_2 w_3) = \frac{1}{6} \nonumber\\
        \textbf{Re}(w_{1}^{2} w_{2} +  w_{1}^{2} w_{3}  + 2 w_{1} w_{2} w_{3}  +  w_{2}^{2} w_{3})  = \frac{1}{3}.
\end{align}

There are many paths satisfying these equations. Note that we could also have found such  3rd order paths by allowing the error at $O(\Delta t^2)$ to be purely imaginary.   Of the 3rd order paths in Fig \ref{fig:pathslinear} that were found to satisfy the linear order conditions, the top and bottom (cyan and brown) paths also satisfy the 3rd order nonlinear order conditions given in \ref{syseq2}. We have tested our complex time-stepping schemes on a variety of differential equations to show that they satisfy the order conditions. The equations are listed in Table \ref{DElist} along with a unique label corresponding to their legend on the convergence plot in Fig.\ref{convergence_all}. 
  \begin{figure}[H]
\begin{centering}
\includegraphics[width=6in]{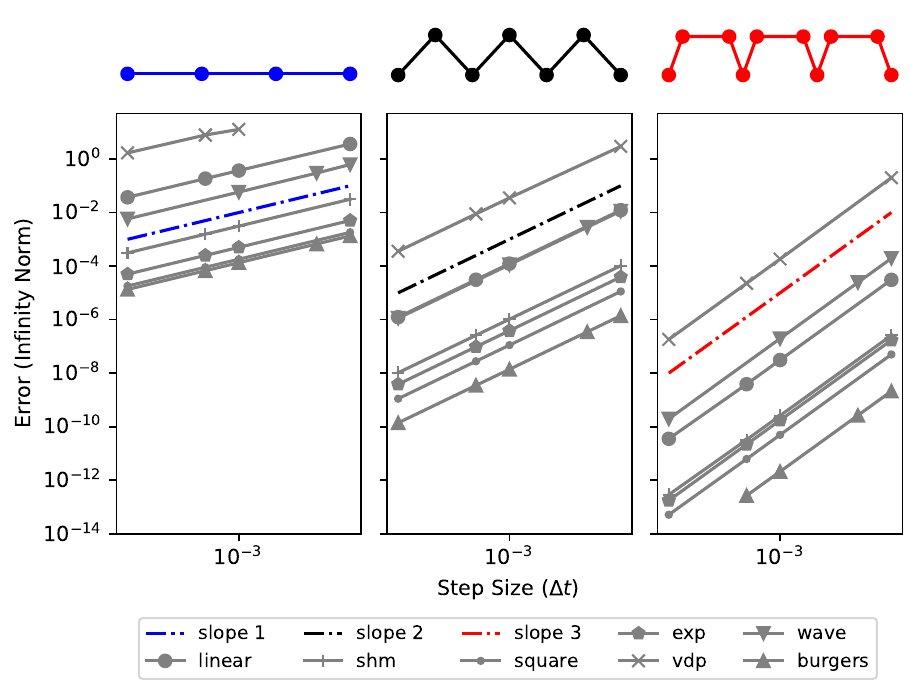}
\caption{Convergence of various ODEs and PDEs (Sec. \ref{subsec:nonlinearde}) for 1-step (forward Euler), 2-step and 3-step complex time steps. The path/method used is displayed above the corresponding convergence plot. The orders of accuracy are as expected.}
 \label{convergence_all}
\end{centering}

\end{figure}

\begin{table}
\label{DElist}
\caption{List of differential equations}
\begin{tabularx}{\textwidth}{|>{\setlength\hsize{\hsize}\setlength\linewidth{\hsize}}X|}
\hline
1. Linear ODEs
\begin{itemize}
\item The Dahlquist test problem (`linear')
\begin{align*}
    \dot{y} = \lambda y,\quad y(0) = 1
\end{align*}
\item  Simple harmonic oscillator (`shm')
    \begin{align*}
    \ddot{y} = -y, \quad y(0) = 1, \quad y'(0) = 0 
\end{align*}
\end{itemize}
\vphantom{1. Real unequal eigenvalues of same sign}
\\
\hline
2. Nonlinear ODEs
\begin{itemize}
    \item Nonlinear autonomous differential equation 1(`square')
\begin{align*}
    \dot{y} = - y^2, \quad y(0) = 1
\end{align*}

    \item Nonlinear autonomous differential equation 2 (`exp')
\begin{align*}
    \dot{y} = -e^y, \quad y(0) = 1
\end{align*}

    \item Nonlinear sine(`nlsin')
\begin{align*}
    \dot{y} = 4y\sin{(t)}^3\cos{(t)}, \quad y(0) = 1
\end{align*}
\item  The Van Der Pol Oscillator (`vdp')
\begin{align*}
    \dot{y_1} = y_2, \dot{y_2} = \mu(1-y_1^2)y_2-y_1, \quad (y_1, y_2)(0) = (2,0)
\end{align*}
\end{itemize}
\vphantom{2. Real unequal eigenvalues of opposite sign}
\\
\hline
3. Linear PDE : The advection equation (`wave')
 
 This is a linear PDE given by 
 \begin{align}
     u_{t} = cu_x, \quad x \in (0,2\pi), \quad t \in (0, 1)
 \end{align}
 
 This PDE is solved with periodic boundary conditions, the initial condition $u(x, t=0) = e^{7(x-\pi)^2}$, and $c = 1$. The exact solution is  $u(x, t) = e^{7(x-t-\pi)^2}$. The Fourier spectral method is used for the spatial differencing with 70 modes. 
\\
\hline
3. Nonlinear PDE : Viscous Burgers equation (`burgers')

  This is a nonlinear PDE given by 
 \begin{align}
     u_{t} +uu_{x} = \nu u_{xx}, \quad x \in (0,2\pi), \quad t \in (0,2)
 \end{align}
 
 This PDE is solved with periodic boundary conditions, the initial condition $u(x, t=0) = 2 \mu \sin x/(1.5+\cos x)
$, and $nu = 0.1$. The exact solution is  $u(x, t) = 2 \mu e^{-\mu t}\sin x/(1.5+e^{-\mu t}\cos x)
$. The Fourier spectral method is used for the spatial differencing with 70 modes. 
\\
\hline
\end{tabularx}
\end{table}

 Apart from the Van Der Pol oscillator, the error for the ODEs and PDEs is calculated by comparing the numerical solution to the exact solution. For the Van Der Pol oscillator, the numerical solution using complex paths is compared to the numerical solution obtained by using the `RK44' 4th order method from Nodepy \cite{ketcheson2020nodepy} with $\Delta t = 10^{-6}$.

 \subsection{Higher order implicit integrators for general nonlinear non-autonomous differential equations} \label{subsec:implicit_nonlinear}
 
 We now explore whether similar results can be achieved for real-valued systems using implicit methods. Previously, we found that the path $(\frac{1}{2} + i \frac{1}{2 \sqrt{3}}, \frac{1}{2} - i \frac{1}{2 \sqrt{3}})$  allows the implicit midpoint method to be 4th order accurate when solving linear differential equations. Does the expected 4th order accuracy of the 2-step complex path for the implicit midpoint method derived in the previous subsection hold for general nonlinear differential equations?  Numerically, we confirm this to be true, provided we take only the real part of the numerically calculated values at the end of each 2-step complex sequence  (Fig. \ref{implicit_nonlinear}). To demonstrate 4th order accuracy analytically, we analyze the midpoint method following reasoning similar to our previous nonlinear analysis for explicit methods (Equations  \eqref{nonlintay1} - \eqref{nonlintay4}). Suppose that we take two implicit steps $w_1 \Delta t$ and  $w_2 \Delta t$ from $y_n$ at $t_n$ to $y_{n+1}$ at at $t + \Delta t $ with $y_{m}$  being the intermediate solution at at $t+ w_1 \Delta t$.

\begin{align}
    \label{impnon1}
    y_n &= y(t_n)\\
    \label{impnon2}
    y_m &= y_n + w_1 \Delta t f(t_n+w_1 \Delta t /2,  (y_n +y_m)/2) \\
    \label{impnon3}
    y_{n+1}  &= y_m + w_2 \Delta t f(t_n+w_1 \Delta t+ w_2 \Delta t /2,  (y_m +y_{n+1})/2)
\end{align}

Unlike the previous analysis, equations \eqref{impnon2} and \eqref{impnon3} are implicit equations and cannot be directly Taylor expanded around $f(t,y_n)$. To overcome this difficulty and calculate the desired $w_1$ and $w_2$ step sizes, we approximate the implicit equations with explicit versions that are accurate to at least 4th order. The explicit approximations are found by recursively substituting the implicit equation into itself in an iterative process similar to Picard Iteration. For example, if you have an 
implicit equation given by
\begin{align} \label{simpleimplicit}
    y = y_0 + \Delta t g(y)
\end{align}
one can recursively apply it twice to obtain the following explicit equation, 

\begin{align}
    y = y_0 + \Delta t g(y_0 + \Delta t g(y_0 + \Delta t g(y_0)))
\end{align}

which is accurate to Eq. \ref{simpleimplicit} to the 3rd order. To obtain the 4th order accurate explicit approximation of equations \eqref{impnon2} and \eqref{impnon3}, they need to be recursively applied at least three times. Note that the explicit equations are 4th order accurate approximations of the original implicit difference equations and not the original differential equation.

At the end of this process,, we substitute $(w_1, w_2) = (\frac{1}{2} + i \frac{1}{2 \sqrt{3}}, \frac{1}{2} - i \frac{1}{2 \sqrt{3}})$ into the explicit approximation of equation \eqref{impnon3} and obtain 

\begin{align}
\label{implicittay}
    y_{n+1} = &y_0 + \Delta t f(t_n, y_n) + \frac{1}{2} \Delta t^2 \bigg( f{\left(t,y \right)} \frac{\partial}{\partial y} f{\left(t,y \right)} + \frac{\partial}{\partial t} f{\left(t,y \right)}
\bigg)\at[\Bigg]{t=t_n, y = y_n}  \nonumber \\
    & + \frac{1}{3!} \Delta t^3 \bigg(f^2{\left(t,y \right)} \frac{\partial^{2}}{\partial y^{2}} f{\left(t,y \right)} + \left(\frac{\partial}{\partial y} f{\left(t,y \right)}\right)^{2} f{\left(t,y \right)}\nonumber \\ &+ 2f{\left(t,y \right)} \frac{\partial^{2}}{\partial y\partial t} f{\left(t,y \right)} + \frac{\partial}{\partial t} f{\left(t,y \right)} \frac{\partial}{\partial y} f{\left(t,y \right)} + \frac{\partial^{2}}{\partial t^{2}} f{\left(t,y \right)} \bigg)\at[\Bigg]{t=t_n, y = y_n} \nonumber\\
    &+ \frac{1}{4!}dt^{4} \bigg( (1+0.096225i)f^{3}{\left(t,y \right)} \frac{\partial^{3}}{\partial y^{3}} f{\left(t,y \right)}\nonumber\\& + (4-0.288675i) f^{2}{\left(t,y \right)} \frac{\partial}{\partial y} f{\left(t,y \right)} \frac{\partial^{2}}{\partial y^{2}} f{\left(t,y \right)}  + (3+0.288675i) f^{2}{\left(t,y \right)} \frac{\partial^{3}}{\partial y^{2}\partial t} f{\left(t,y \right)}\nonumber\\& + 3 f{\left(t,y \right)} \frac{\partial}{\partial t} f{\left(t,y \right)} \frac{\partial^{2}}{\partial y^{2}} f{\left(t,y \right)} + f{\left(t,y \right)} \left(\frac{\partial}{\partial y} f{\left(t,y \right)}\right)^{3} \nonumber\\ & + (5-0.57735i) f{\left(t,y \right)} \frac{\partial}{\partial y} f{\left(t,y \right)} \frac{\partial^{2}}{\partial y\partial t} f{\left(t,y \right)}  +  (3+0.288675i)  f{\left(t,y \right)} \frac{\partial^{3}}{\partial y\partial t^{2}} f{\left(t,y \right)} \nonumber\\ & + \frac{\partial}{\partial t} f{\left(t,y \right)} \left(\frac{\partial}{\partial y} f{\left(t,y \right)}\right)^{2} + 3 \frac{\partial}{\partial t} f{\left(t,y \right)} \frac{\partial^{2}}{\partial y\partial t} f{\left(t,y \right)}  +(1-0.096225i) \frac{\partial^{3}}{\partial t^{3}} f{\left(t,y \right)}  \nonumber\\ &+ (1-0.288675i)\frac{\partial^{2}}{\partial t^{2}} f{\left(t,y \right)} \frac{\partial}{\partial y} f{\left(t,y \right)}
    \bigg)\at[\Bigg]{t=t_n, y = y_n} + O(\Delta t^5). 
\end{align}

If we  compare \eqref{implicittay} to the first 5 terms of Taylor expansion in equation \eqref{nonlintay}, we see that the error at O($\Delta t^4$) is purely imaginary, which demonstrates we can get 4th order accuracy for real-valued systems by extracting the real component of the solution.

Now, that we have proved our two-step implicit midpoint method to be 4th order accurate for real-valued nonlinear systems, we test it on the Van Der Pol oscillator,

\begin{align}
    \dot{y} = x - \frac{x^3}{3} -\frac{\dot{x}}{\mu}
\end{align}
which can be rewritten as the following system

\begin{align}
    \dot{x} &= y \nonumber\\
    \dot{y} &= \mu (1-x^2)y-x.
\end{align}

We set $\mu = 10$ and the initial condition as $(x,y) = (2,0)$. We achieve the expected 4rd order accuracy for the 2-step complex implicit method (Fig. \ref{implicit_linear}).

      \begin{figure}
\begin{centering}
\includegraphics[width=5in]{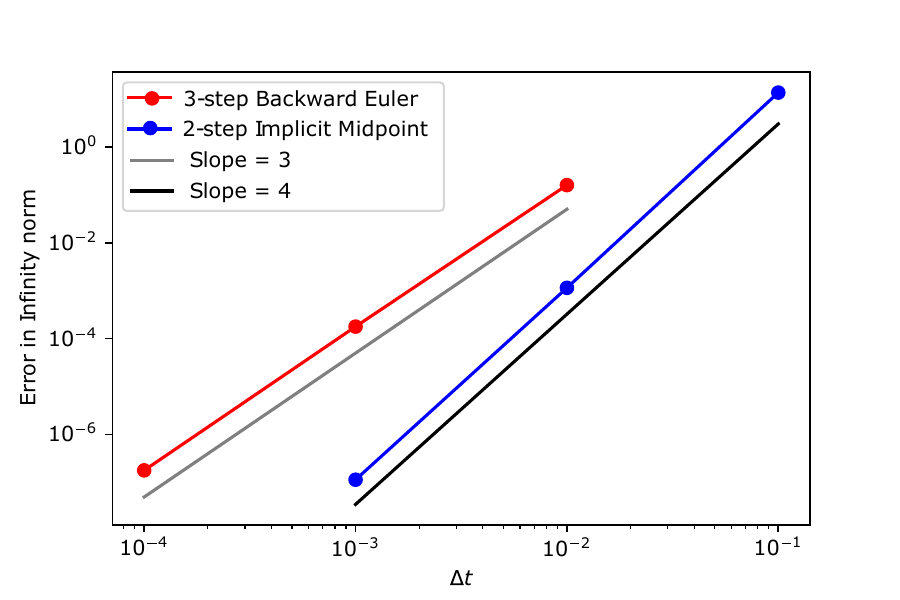}
\caption{The 2-step implicit midpoint method has 4th order accuracy while the 3-step Backward Euler has 3rd order accuracy for the Van Der Pol oscillator.}
 \label{implicit_nonlinear}
\end{centering}
\end{figure}
 
\subsection{Complex paths and coefficients for general Runge-Kutta methods}\label{subsec:nonlineardeRK}

So far, we have derived paths in the complex plane that satisfy the order conditions for scalar non-autonomous differential equations. We also applied these paths to vector-valued non-autonomous equations including systems and partial differential equations, and demonstrated up to 3rd order accuracy with the numerical results in subsection \ref{subsec:nonlinearde}.  However, Butcher showed in \cite{butcher2009fifth} that the order conditions for scalar non-autonomous differential equations are the same as those for vector-valued differential equations only up to order 4. Thus, the approach used so far is insufficient to derive the correct order conditions for a fifth order or higher complex path for vector-valued differential equations. In order to obtain the order conditions for general time-steppers for vector-valued equations, we need to utilize the theory of Butcher series (B-Series) and rooted trees \cite{butcher1972algebraic, hairer1974butcher}. B-Series allow us to represent both the exact solutions to general differential equations and their  numerical solutions created from a broad class of integrators,  as a series expansion in $\Delta t$ similar to the Taylor series analysis we have been doing so far. Runge-Kutta methods and methods created by taking complex time steps with Runge-Kutta methods fall into the class of integrators whose numerical solutions can be represented by B-Series. In this section, we describe the process of deriving order conditions for complex time-steppers using B-Series and show how the software BSeries.jl \cite{ketcheson2021computing} simplifies that process tremendously. Finally, as an example, we re-derive the order conditions necessary for the 3-step 3rd order complex Euler method using  and show that it is the same as that obtained by our Taylor series analysis in  \ref{syseq}.

Consider the first order system of differential equations given by

\begin{align}
    \dot{y} = f(y), \quad y(t_0) = y_0.
\end{align}

The exact solution at time $t = t_0 + \Delta t$ is given by 
\begin{align}
    y(t_0 + \Delta t) = y_0 + \Delta t \dot{y} (t_0)+ \frac{1}{2} \Delta t^2 \ddot{y} (t_0)+ \frac{1}{3!} \Delta t^3 \dddot{y} (t_0)+ \hdots
\end{align}

Each term in the above expansion can be written as the combination of products of partial derivatives of $f$, known as elementary differentials. Using $f'$ as short-hand for $\frac{\partial f}{\partial t}$, we can rewrite each term in the expansion in terms of their elementary differentials.

\begin{align}
    \label{elementarydiff}
    \dot{y} &= f \nonumber\\
     \ddot{y} &= f'(f) \nonumber\\
       \dddot{y} &= f''(f,f) + f'(f'(f))
\end{align}

The set of elementary differentials is isomorphic to the set of rooted trees and thus, each elementary differential has an associated rooted tree. We can rewrite equations \ref{elementarydiff} in terms of the associated trees.

\begin{align}
    \label{elementarydiff_rooted}
    \dot{y} &= F_{f}\mathopen{}\left( \rootedtree[] \right)\mathclose{} \nonumber\\
     \ddot{y} &= F_{f}\mathopen{}\left( \rootedtree[[]] \right)\mathclose{} \nonumber\\
       \dddot{y} &= F_{f}\mathopen{}\left( \rootedtree[[][]] \right)\mathclose{} + F_{f}\mathopen{}\left( \rootedtree[[[]]] \right)\mathclose{}
\end{align}
where $F_f(\tau)$ is the elementary differential associated with the rooted tree $\tau$.

We can also write the  exact solution at time $t = t_0 + \Delta t$ in terms of the associated rooted trees, creating a B-Series for the exact solution.

\begin{align}
\label{eqn:bseriesExact}
    y(t_0 + \Delta t) = y_0 + \sum_{\tau \in T} \frac{\Delta t^{|\tau|}}{ \sigma(\tau) \gamma(\tau)}F_f(\tau) (y) (t_0) 
\end{align}

where $F_f(\tau)$ is the elementary differential associated with the rooted tree $\tau$, $T$ is the set of all rooted trees, $|\tau|$ is the number of nodes in the rooted tree $\tau$, $\sigma(\tau)$ is the symmetry of the rooted tree $\tau$ and $\gamma(\tau)$ is the density of the rooted tree $\tau$. The rooted trees, elementary differentials, symmetries and densities can all be defined recursively and a more elaborate discussion of them can be found in \cite{mclachlan2015butcher, butcher2021b, ketcheson2021computing,  butcher1972algebraic, hairer1974butcher}.

Numerical time stepping methods that have an associated B-Series, such as Runge-Kutta methods, are called B-Series methods \cite{butcher1972algebraic, hairer1974butcher}. For a Runge-Kutta method with elementary weights $\Phi$, its solution at time $t = t_0 + \Delta t $ can be written in terms of its B-Series.

\begin{align}
\label{eqn:bseriesRK}
    y(t_0 + \Delta t) = y_0 + \sum_{\tau \in T} \frac{\Delta t^{|\tau|}}{ \sigma(\tau)} \Phi(\tau) F_f(\tau) (y) (t_0) 
\end{align}

One can now derive the order conditions needed to be satisfied by the elementary weights by matching the elementary differentials in equation \ref{eqn:bseriesRK}  with the elementary differentials in equation \ref{eqn:bseriesExact} up to the desired order.

The complex time steppers that we have mentioned in this paper also have associated B-Series. We can derive their B-Series using the composition of the B-Series associated with each individual Euler steps. Once we construct the B-Series, we can derive the order conditions needed to be satisfied by the complex steps by matching the B-Series to the B-Series of the exact solution up to the desired order. We can do this same process for any complex time integrator obtained by taking complex time steps with any Runge-Kutta method. We can also let all the coefficients of the Runge-Kutta method be complex valued and further increase our degrees of freedom.

Ketcheson and Ranocha  recently released a package in Julia named BSeries.jl \cite{ketcheson2021computing} that allows for the symbolic construction, composition and manipulation of B-Series. We now demonstrate how BSeries.jl can be used to construct B-Series for a general complex time stepper and obtain order conditions.

Runge-Kutta methods can be completely defined by their coefficients $A,b$ and $c$. The following code constructs the B-Series for a Forward Euler method taking a time step of $w_1 \Delta t$ from its coefficients $A,b$ and $c$. The coefficient $b$ which is $1$ for a time step of $\Delta t$, is modified to $w_1$  for the time step of $w_1\Delta t$. An alternate way of obtaining the B-Series for the numerical solution after a time step of $w_1\Delta t$ is to replace every instance of $\Delta t$ with $w_1 \Delta t$ in the B-Series corresponding to a time step of $\Delta t$.

\begin{verbatim}
        w_1 = symbols("w_1", real=true)
        A =  [0;]
        A = A[:,:]
        b =  [w_1]
        c = [0]
        coefficients_w1 = bseries(A, b, c, 5)
        latexify(coefficients_w_1, cdot=false)
\end{verbatim}

This gives us the following B-Series.
\begin{align}
   y(t+w_1\Delta t) =  y_0 + \Delta t w_{1} F_{f}\mathopen{}\left( \rootedtree[] \right)\mathclose{} 
\end{align} 

Similarly, we can find B-Series for the numerical solution from the Forward Euler method after a time step of $w_2 \Delta t$ and a time step of $w_3 \Delta t$. We can then compose the B-Series together to get the B-Series for the numerical solution after  taking three complex Euler time steps.

We can run the following code to find the B-Series for the Forward Euler method after two time steps of size $w_1 \Delta t$ and $w_2 \Delta t$

\begin{verbatim}
        coeff_w1w2 = compose(coefficients_w1,coefficients_w2);
        latexify(coeff_w1w2, cdot=false)
\end{verbatim}

\begin{align}
\label{eqn:bseriesjlw1w2}
   y(t+w_1\Delta t+w_2\Delta t) =  y_0  + \Delta t \left( w_{1} + w_{2} \right) F_{f}\mathopen{}\left( \rootedtree[] \right)\mathclose{} + \Delta t^{2} w_{1} w_{2} F_{f}\mathopen{}\left( \rootedtree[[]] \right)\mathclose{} + \hdots
\end{align} 

Similarly, we can find the B-Series for the Forward Euler method after the time steps $w_1 \Delta t$ , $w_2 \Delta t$ and $w_3 \Delta t$.

\begin{verbatim}
        coeff_w1w2w3 = compose(coeff_w1w2,coefficients_w3);
        latexify(coeff_w1w2w3, cdot=false)
\end{verbatim}

\begin{align}
\label{eqn:bseriesjlw1w2w3}
   y(t+(w_1+w_2+w_3)\Delta t) = & y_0+ \Delta t \left( w_{1} + w_{2} + w_{3} \right) F_{f}\mathopen{}\left( \rootedtree[] \right)\mathclose{} + \Delta t^{2} \left( w_{1} w_{2} + w_{3} \left( w_{1} + w_{2} \right) \right) F_{f}\mathopen{}\left( \rootedtree[[]] \right)\mathclose{}\nonumber  \\& + \Delta t^{3} w_{1} w_{2} w_{3} F_{f}\mathopen{}\left( \rootedtree[[[]]] \right)\mathclose{} + \Delta t^{3} \left( \frac{w_{1}^{2} w_{2}}{2} + \frac{w_{3} \left( w_{1} + w_{2} \right)^{2}}{2} \right) F_{f}\mathopen{}\left( \rootedtree[[][]] \right)\mathclose{} +\hdots
\end{align} 

You can also find the B-Series of the exact solution with a single line.

\begin{verbatim}
        latexify(ExactSolution(coeff_w1w2w3), cdot = false)
\end{verbatim}

\begin{align}
    \label{eqn:bseriesjlexact}
     y(t+\Delta t) = & y_0+  \Delta t F_{f}\mathopen{}\left( \rootedtree[] \right)\mathclose{} + \frac{\Delta t^{2}}{2} F_{f}\mathopen{}\left( \rootedtree[[]] \right)\mathclose{} + \frac{\Delta t^{3}}{6} F_{f}\mathopen{}\left( \rootedtree[[[]]] \right)\mathclose{} + \frac{\Delta t^{3}}{6} F_{f}\mathopen{}\left( \rootedtree[[][]] \right)\mathclose{}
\end{align}

Matching the coefficients in equations \ref{eqn:bseriesjlexact} and \ref{eqn:bseriesjlw1w2} gives us the conditions necessary to obtain 2nd order accuracy. Matching the coefficients in equations \ref{eqn:bseriesjlexact} and \ref{eqn:bseriesjlw1w2w3} gives us the conditions necessary to obtain 3rd order accuracy and we can see that the order conditions are exactly those we previously derived in equation \ref{syseq}.

The theory of B-Series and the symbolic package BSeries.jl  thus provides us with a systematic way to find the higher order complex paths for general Runge-Kutta methods without much difficulty.

\section{Computational cost of complex timestepping schemes} \label{section:cost}

 In this section, we discuss a major drawback of the complex time stepping schemes, the increased computational cost for performing complex operations.
 
 Consider the forward Euler method applied to the differential equation $\dot{y} = f(t,y)$, yielding the iteration scheme $ y_{i+1} = y_i + \Delta t f(t,y)$. If the differential equation is linear, the most expensive part of the scheme is the multiplication $\Delta t f(t,y) = \Delta t \lambda y$.  Naively multiplying two complex numbers costs roughly four times as much as multiplying two real numbers. If we split the complex difference equation into its real and imaginary parts, we get the following systems made up of purely real variables and  we can clearly see that the number of multiplications needed at each step has increased by a factor of 4. 
\begin{align} \label{cost}
    \bcm y_r  \\y_i\ecm^{k+1} = \bcm y_r  \\y_i\ecm^{k} + \bcm \lambda_r \Delta t_r & -\lambda_i \Delta t_i \\ \lambda_i \Delta t_i & \lambda_r \Delta t_r\ecm \bcm y_r  \\y_i\ecm^{k}
\end{align}

So, we expect the cost of solving linear differential equations using complex time integration could be as high as 4 times that of their real equivalents. Indeed, when solving the real-valued equation $\dot{y} =y$, a 2-step complex Euler integrator has a much larger running time than the real-valued mid-point method, which has comparable order (Fig \ref{fig:timings}). However,  when we solve the complex valued differential equation $\dot{y} = \big(\frac{1}{2}- \frac{1}{2}i \big)y$, the running times are closer for the real and complex integrators.
 
  \begin{figure}
\begin{centering}
\includegraphics[width=5in]{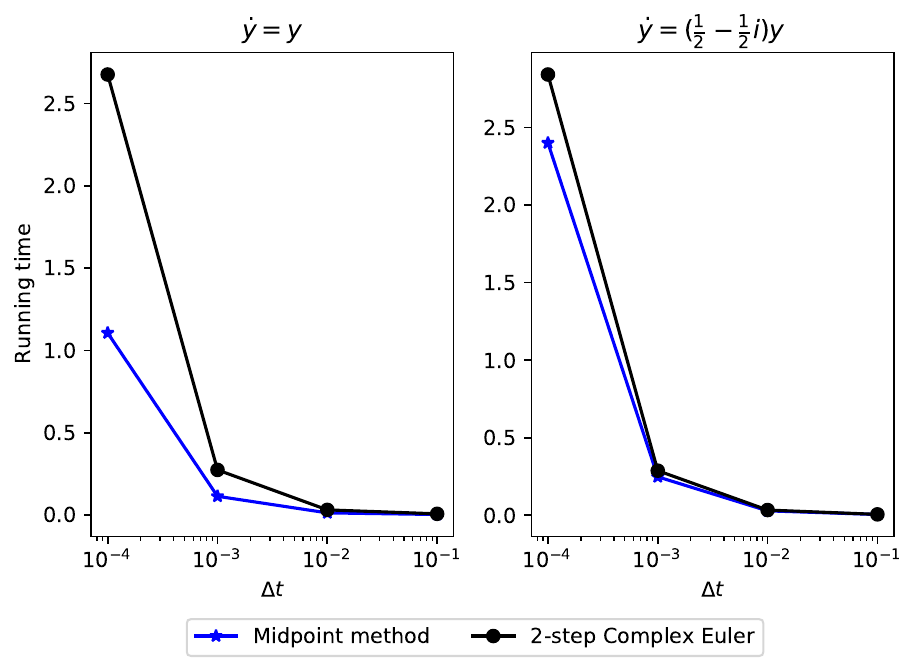}
\caption{Running times for the two step complex Euler method and the real explicit midpoint method for the differential equation $\dot{y} =y$ and  $\dot{y} = \big(\frac{1}{2}- \frac{1}{2}i \big)y$ solved from time 0 to 2 with various time steps ($\Delta t)$. We see the additional computational cost of complex time integrators for solving real differential equations. The additional cost is significantly less when solving complex valued differential equations.}
 \label{fig:timings}
\end{centering}

\end{figure}

For nonlinear systems, the dominant cost comes from function evaluations. Functions can be designed to minimize the computational cost, but the real equivalent of any complex-valued function can always be calculated using fewer operations. So, for  real valued differential equations, the complex time stepper will always be more expensive than an explicit Runge-Kutta method with the same number of function evaluations. Therefore, complex time steppers are more likely to be advantageous for  the numerical solution of complex differential equations. In the next two sections, we discuss unique advantages obtained by using complex integrators.

In Section \ref{sec:orderbarrier}, we demonstrate that complex time integrators require fewer function evaluations than the classical Runge-Kutta methods of the same order by circumventing the Runge-Kutta order barrier \cite{butcher2009fifth}. In that case, when the cost of additional function evaluations is more than the cost of complex operations, the complex integrators may be the less expensive option. In Section \ref{stabilitysection}, we see how complex time integrators allow for expanded stability regions for complex differential equations.

\section{Breaking the order barrier for explicit Runge-Kutta methods    }\label{sec:orderbarrier}

\subsection{What is the Runge-Kutta Order barrier?}

An $s$-stage Runge-Kutta method of order $p$ uses the free parameters in its $s$ stages to satisfy the order conditions necessary to acheive $p$-th order where $s \geq p$. Table \ref{tab:rk_conditions} lists the number of conditions needed to be satisfied at each order and the number of free parameters associated with the stages for a feasible Runge-Kutta method. Up to fourth order, one can construct explicit Runge-Kutta methods where $s = p$. For fifth order and beyond, the number of stages needs to be more than the order desired, i.e. $s > p$ \cite{butcher2009fifth}. The inability to construct Runge-Kutta methods with $ s =  p$ for order 5 or higher  is known as the order barrier for Runge-Kutta methods \cite{Butcher:2007}. In the following subsections, we show that it is possible to construct a 5-step, 5th order complex-time stepper for real-valued differential equations, and a 5-step complex-time-stepper for complex systems which satisfies the order conditions for 5th order accuracy up to error of $2.1 \times 10^{-11}$ 

\begin{table}[tbhp]
{\footnotesize
\caption{Conditions for various orders of accuracy and free parameters associated with Runge-Kutta methods with various stages}\label{tab:rk_conditions}
\begin{center}
\begin{tabular}{ |c|c|c|c|c|c|c|c|c|  } 
 \hline
 Order & 1 & 2 & 3 &4 & &5 &6 \\
   \hline
 Conditions & 1 & 2 & 4 &8 & &17 &37 \\
  \hline
 Stages & 1 & 2 & 3 & 4 &5 &6 &7  \\
  \hline
 Parameters& 1 & 3 & 6 &10 &15 &21 &28 \\ 
 \hline
\end{tabular}
\end{center}}
\end{table}

\subsection{Circumventing the Runge-Kutta order barrier for real-valued differential equations}

Let us take a specific look at the fifth order Runge-Kutta-5 method. A five stage method has 15 free parameters, which is insufficient to satisfy the 17 order conditions. Therefore, fifth order requires six stages and hence six function evaluations. What if we allowed some of the Runge-Kutta coefficients in a 5-stage Runge Kutta step to be complex variables and constrained the error to be purely imaginary beyond order 1?  Since we are working with real-valued differential equations, we can take the real part of the numerical solution at the end of each time step and achieve fifth order accuracy. This increases the number of free parameters from 15 to between 17 and 29 in order to solve the 17 order conditions! Table \ref{tab:rk_conditions_complex} describes the increase in free parameters when using Runge-Kutta methods with complex coefficients for real-valued problems.

\begin{table}[tbhp]
{\footnotesize
\caption{Increased free parameters when using  Runge-Kutta methods with complex coefficients for real valued differential equations}\label{tab:rk_conditions_complex}
\begin{center}
\begin{tabular}{ |c|c|c|c|c|c|c|c|c|  } 
 \hline

 Stages & 1 & 2 & 3 & 4 &5 &6 &7  \\
  \hline
 Original Parameters& 1 & 3 & 6 &10 &15 &21 &28 \\ 
 \hline
Parameters with complex coefficients & 1 & 5 & 11 &19 &29 &41 &55 \\ 
 \hline
\end{tabular}
\end{center}}
\end{table}
 
 \begin{figure}
\begin{centering}
\includegraphics[width=5in]{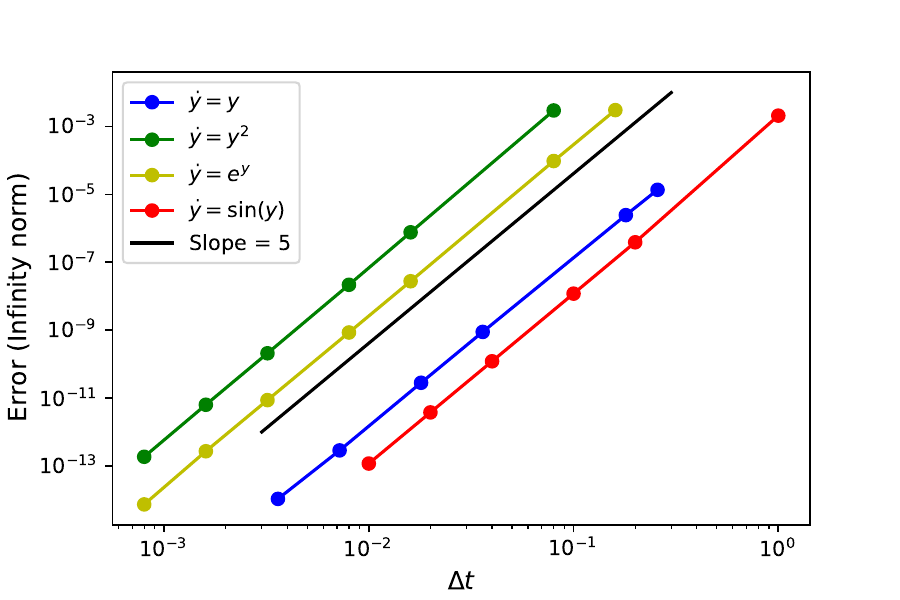}
\caption{The 5 stage 5th order Runge-Kutta method with complex Runge-Kutta coefficients has 5th order accuracy for real-valued differential equations. }
 \label{fig:order_barrier}
\end{centering}
\end{figure}

The following equations describe the complex Runge-Kutta-5 method for real-valued systems, 
\begin{align}
    \label{orderbarrier}
    y_n &= y_n\\
    k_{1} &= f(t_n, y_n)\\
    k_{2} &= f(t_n + c_2 \Delta t, y_n + \Delta t a_{21} k_{11})\\
    k_{3} &= f(t_n + c_3 \Delta t, y_n + \Delta t (a_{31} k_{1}+a_{32} k_{2}))\\
     k_{4} &= f(t_n + c_4 \Delta t, y_n + \Delta t (a_{41} k_{1}+a_{42} k_{2}+a_{43} k_{3}))\\
     k_{5} &= f(t_n + c_5 \Delta t, y_n + \Delta t (a_{51} k_{1}+a_{52} k_{2}+a_{53} k_{3}+a_{54} k_{4}))\\
    y_{n+1}  &= \textbf{Re}(y_n +b_{1}  \Delta t k_{1}+b_{2}  \Delta t k_{2}+b_{3}  \Delta t k_{3}+b_{4}  \Delta t k_{4}+b_{5}  \Delta t k_{5}).
\end{align}

With the constraints $c_i = \sum_j a_{ij}$ and $\sum_i b = 1$, these equations describe 5th-order 5-stage Runge-Kutta methods for real-valued differential equations with complex-valued coefficents. The coefficients for one such method are listed in Table \ref{tab:rk5_real} and fifth order accuracy is satisfied for the differential equations in Fig \ref{fig:order_barrier}.

\begin{table}[tbhp]
{\footnotesize
\caption{5 stage Runge-Kutta method for real differential equations.}\label{tab:rk5_real}
\begin{center}
\begin{tabular}{|c|c|} \hline
\bf Parameters & \bf Coefficients\\ \hline
$a_{21}$ & 0.4359927813681785+0.18820134969500546i \\
$a_{31}$ & 0.5984581874875472-0.6801332593573275i \\
$a_{32}$ & 0.09443736474929139+0.9536785997657906i \\
$a_{41}$ & -0.5318588311678385+0.06199640671232824i \\
$a_{42}$ & 0.7090327838155295+0.17964710178664897i \\
$a_{43}$ & 0.7502336256211084+0.014717632306291894i \\
$a_{51}$ & 0.11597306658216743+0.19224587759603343i \\
$a_{52}$ & -1.211955728302135+0.6697664876487938i \\
$a_{53}$ & 1.2481894547610273-1.0517638511367862i \\
$a_{54}$ & 1.1414853262483962+0.48897430346527126i \\
$b_1$ & 0.14051930946802596+0.047034144968353016i \\
$b_2$ & 0.5387707041084535+0.40236901283300025i \\
$b_3$ & 0.28423712936738976-0.23543136671378956i \\
$b_4$ & 0.06199686687229152-0.21051296375579337i \\ 
$b_5$ & -0.02552400981616073-0.003458827331770331i \\\hline
\end{tabular}
\end{center}
}
\end{table}

 Complex coefficients thus allow us to circumvent the Runge-Kutta order barrier and create a 5-stage fifth order method for real-valued differential equations. 
 \subsection{ Five stage, approximately fifth order complex Runge-Kutta methods}
 
In the previous subsection, we saw that complex coefficients can thus be used to break the order barrier for real-valued differential equations. Even larger benefits may occur at higher orders, where $s$ is much larger than $p$ for existing Runge-Kutta methods with real coefficients. However, since we take the real part of the solution at the end of every complex path, this approach cannot be used for complex-valued differential equations. As the additional computational cost of complex time integration is negligible for complex-valued systems, it is desirable to create a 5-stage Runge Kutta method where both the real and imaginary parts of the error are as close to zero as possible, approaching order 5. 

Real solutions to the order conditions are a subset of complex solutions and thus, complex solutions have the potential to satisfy the order conditions when there are no real solutions. We searched for complex solutions to the order conditions for fifth order accuracy using five stages. Root-finding and minimization algorithms from Python's Scipy package \cite{2020SciPy-NMeth} and Matlab's optimization toolbox \cite{MatlabOTB} were used in the numerical search for possible complex solutions. We found complex coefficients that satisfy the order conditions to a total error of $2.1\times 10^{-11}$. This means that for the differential equation $\dot{y} = f(t, y)$, the error behaves like $2.1\times 10^{-11} \Delta t^4 \dddot{f} + O(\Delta t^5)$. We even found real solutions that satisfy the order conditions to a total error of $1.8\times 10^{-9}$. The coefficients for both methods are listed in Table \ref{tab:rk5_complex_coeff}. We tested the complex five stage Runge-Kutta method on a few differential equations to obtain the convergence plot shown in Fig \ref{fig:order_barrier2}.

\begin{table}[tbhp]
{\footnotesize
\caption{5 stage Runge-Kutta method with complex and real coefficients that approximately satisfy the order conditions for fifth order accuracy. }\label{tab:rk5_complex_coeff}
\begin{center}
\begin{tabular}{|c|c||c|} \hline
\bf Parameters & \bf Complex Coefficients & \bf Real Coefficients \\ \hline
$a_{21}$  &  1.856587156265275e-07+1.5309457192095022e-07i &  5.254899676102671e-07 \\ 
$a_{31}$  &  355378.2918682022+744398.7276677284i  &  -282414.4914234111 \\ 
$a_{32}$  &  -355377.7953985455-744399.1156280392i  &  282415.0362283838 \\ 
$a_{41}$  &  10087.244864198223+2889.0099565661917i &  2300.659307961569 \\ 
$a_{42}$  &  -10086.873754015176-2889.502710365815i  &  -2300.39437640888 \\  
$a_{43}$  &  0.6299769187106239+0.4890885486059816i  &  0.355521993237099 \\  
$a_{51}$  &  16933.145111205715+9895.134727417835i &  -47221.11292217593 \\ 
$a_{52}$  &  -16932.764260866286-9895.630239734079i &  47221.41809024295 \\ 
$a_{53}$  &  0.6179505431419234+0.49914380654207474i &  -0.5826235568166092 \\ 
$a_{54}$  &  0.001199117424035724-0.003631490298717103i &  1.277455493703932 \\ 
$b_{1}$  &  -46564.847414291915+214551.5532581192i &  -51977.8184877715 \\ 
$b_{2}$  &  46565.24321098434-214551.70058574365i &  51978.11194824268 \\  
$b_{3}$  &  0.20881428641527866+0.0021225559323642816i  &  0.1667650923273279 \\
$b_{4}$  &  5.083449173489563-12.796017531317302i &  0.4161357937120537 \\  \hline
\end{tabular}
\end{center}
}
\end{table}

 \begin{figure}
\begin{centering}
\includegraphics[width=5in]{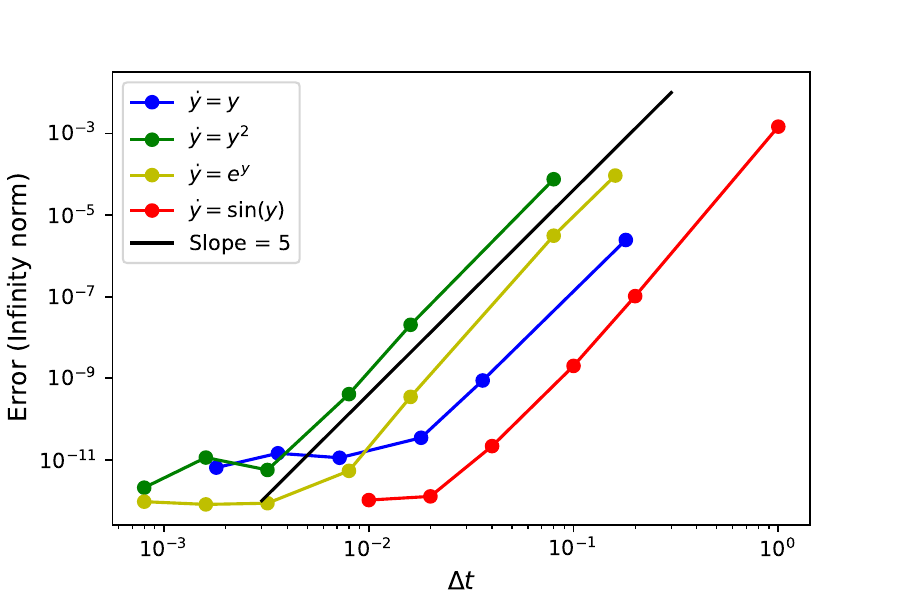}
\caption{The 5 stage Runge-Kutta method that satisfied the order conditions for fifth order accuracy  to an error of $2.1\times 10^{-11}$}
 \label{fig:order_barrier2}
\end{centering}
\end{figure}

Here, we have shown the existence of  five-stage fifth-order methods up to certain precision for both real and complex valued differential equations. To our knowledge, this has not been shown before. 

We tested our approximately fifth order complex Runge-Kutta method on the linear Schrodinger equation $u_t = i u_{xx}, x \in [0, 2\pi], t\in [0, 10]$ with periodic boundary conditions. The spatial differentiation is done using the the Fast Fourier Transform (FFT) with 100 modes. The exact solution is given by $u(x, t) = e^{i(x-t)}+e^{2i(x-2t)}$. In Table \ref{tab:schro_table}, we compare the average computational time and error ($L^1$ grid function norm) from ten simulations of the Schrodinger equation by our approximately fifth order complex five stage Runge-Kutta method (CRK5) to a traditional 5th order six stage Runge-Kutta method, Fehlberg's method with Formula 2 (RKF). 
\begin{table}[tbhp]
{\footnotesize
\caption{Computational time (seconds) and error in the simulation of the Schrodinger equation by the complex Runge-Kutta method (CRK5) and the Fehlberg method (RKF)}\label{tab:schro_table}
\begin{center}
\begin{tabular}{|c|c|c||c|c||} \hline
\bf Step size & \bf Time  (CRK5) & \bf Time (RKF5) & \bf Error (CRK5) & \bf Error (RKF) \\ \hline
$0.0002$  &  6.41 &  7.52 &  2.44e-08& 1.01e-13 \\  \hline
$0.0001$   &  13.01 &  15.53 &  1.28e-08& 1.24e-13\\ \hline
$0.00005$   &  25.91 &  30.29&  9.99e-09& 1.64e-13 \\ \hline
\end{tabular}
\end{center}
}
\end{table}

We can clearly see from  Table \ref{tab:schro_table} that although the complex  5-stage method is limited by its precision, it is less computationally expensive when compared to a traditional fifth order Runge-Kutta method. We hope this example motivates searches for complex solutions to order conditions especially in the case of complex-valued differential equations.

\section{Expanded regions of stability with complex time-steps} \label{stabilitysection}

In the previous sections, we designed paths in the complex plane to achieve higher order. In this section, we leverage complex paths in an alternative way and expand the time steppers's region of absolute stability. First, we discuss the linear stability regions of the complex integrators developed in Sec. \ref{subsec:linearFEpaths}. Next, we show how these regions can be expanded by designing the complex steps to optimize a desired stability polynomial. 
Expanded stability regions have been studied in a similar manner previously in the context of Runge-Kutta methods where the order of accuracy for the Runge-Kutta method is sacrificed to obtain a larger stability region \cite{riha1972optimal, lawson1966order,abdulle2000roots, bogatyrev2004effective, ketcheson2013optimal, parsani2013optimized, kubatko2014optimal}. In most previous works, a new Runge-Kutta integrator (different integration method) was derived that had a stability function/polynomial corresponding to the desired stability region. Here, we keep the same integration method and traverse a path in the complex plane to obtain the desired stability region (different integration domain). We illustrate this approach using the forward Euler method. We also show that complex stepping may enable even larger regions designed for specific problems such as complex valued differential equations in quantum systems, where stability currently limits computational efficiency.

Consider the linear equation $\dot{y} = \lambda y$ studied in Sec. \ref{subsec:linearFEpaths}. Most integrators, including those discussed here, convert the linear equation into the difference equation $y(t+\Delta t) = \Phi(\lambda \Delta t)y(t)$ where $\Phi(z)$ is the stability function of the integrator \cite{leveque2007finite}. The region $z \in \mathbb{C}$ where $|\Phi(z)|\leq 1$ is the region of absolute stability. For a complex $n$-step $n$th order method, the stability function is given by the first $n +1$ terms in the Taylor  series of $e^{\Delta t}$. For the 3 step 3rd order method, the stability function is

\begin{align}
\Phi(z) = 1 + z + \frac{z^2}{2} + \frac{z^3}{3!}.
\end{align}

The regions of absolute stability for the 1 step 1st order, 2 step 2nd order and 3 step 3rd order methods (Fig \ref{4_1}) are the same as those for the n-stage Runge-Kutta because they have the same stability functions \cite{leveque2007finite}. Note that $\Delta t$ in all these methods refers to the step size separating the beginning and end points of the complex path and hence a real time step. Since the 1-step, 2-step and 3-step methods require a different number of steps and hence a different number of function evaluations to take a net step of size $\Delta t$, a standard metric for comparison among methods is the effective step size $\Delta t_{eff}  = \Delta t/n$ where $n$ is the number of steps in the complex integrator case or the number of stages in the case of Runge Kutta methods \cite{ketcheson2013optimal}.

 \begin{figure}
\begin{centering}
\includegraphics[width=5in]{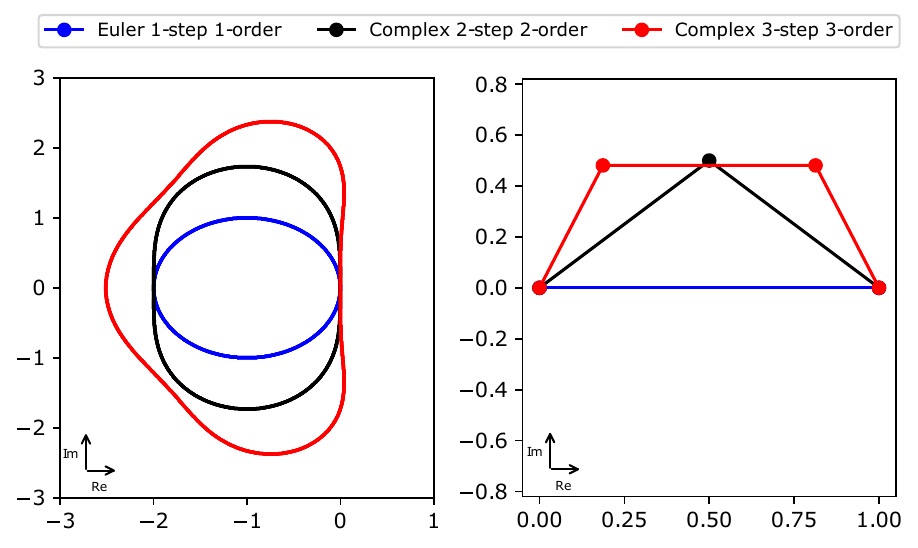}
\caption{The 1-step first order, 2-step 2nd order and 3-step 3rd order complex Euler methods share the same stability regions as the 1,2 and 3 stage Runge-Kutta methods (left). Corresponding complex time-stepping paths are given on right.}
 \label{4_1}
\end{centering}
\end{figure}

We would like to expand the regions in Fig \ref{4_1} by optimizing the stability function. If we relax our requirement for a 3-step integrator to be 3rd order accurate, and require only 2nd order accuracy, its stability function would be given by

\begin{align}
\Phi(z) = 1 + z + \frac{z^2}{2} + k z^3.
\end{align}

The coefficient $k$ in the above expression is the same as the coefficient in front of the $\lambda^3 \Delta t^3$ in equation \eqref{eq:linearcoeff}: $w_1w_2w_3$. We can choose the complex steps $w_1, w_2$ and $w_3$ such that they satisfy the first and second order conditions ($w_1 + w_2 + w_3 = 1, w_1 w_2 +w_2w_3 +w_1w_3 = 1/2$)and make $w_1 w_2 w_3$ equal to our desired value of $k$. The optimal $k$ value is the one that would allow us to take the largest stable time step. For problems with purely real and imaginary eigenvalues, the optimal $k$ values and the associated optimal stability functions/polynomials have been long-studied \cite{riha1972optimal,bogatyrev2004effective,abdulle2000roots}. In the past decade, Ketcheson and co-authors \cite{parsani2013optimized, kubatko2014optimal, ketcheson2013optimal} have explored optimal stability functions in the context of $\dot{y} = L y$ where $L$ is a matrix with quite arbitrary spectra. In \cite{ketcheson2013optimal}, Ketcheson and Ahmadia  describe an optimization algorithm to obtain the optimal stability polynomial that allows the maximal time step depending on the spectra of $L$. This algorithm (RKOpt \cite{ketcheson2020rk}) is publicly available through Nodepy \cite{ketcheson2020nodepy}.

By choosing the value $k$ that results in the largest permissible time step along the negative real eigenvalue line, we achieve a 3-step 2nd order method (blue) with expanded stability along the negative real axis compared to the 3-step 3rd order method (red Fig \ref{4_2}). Requiring only 1st order accuracy for the 3-step method can nearly triple the size of the optimal stability region along the negative real axis (gold) and surprisingly requires only real time-steps (Fig \ref{4_2}). 
  \begin{figure}
\begin{centering}
\includegraphics[width=5in]{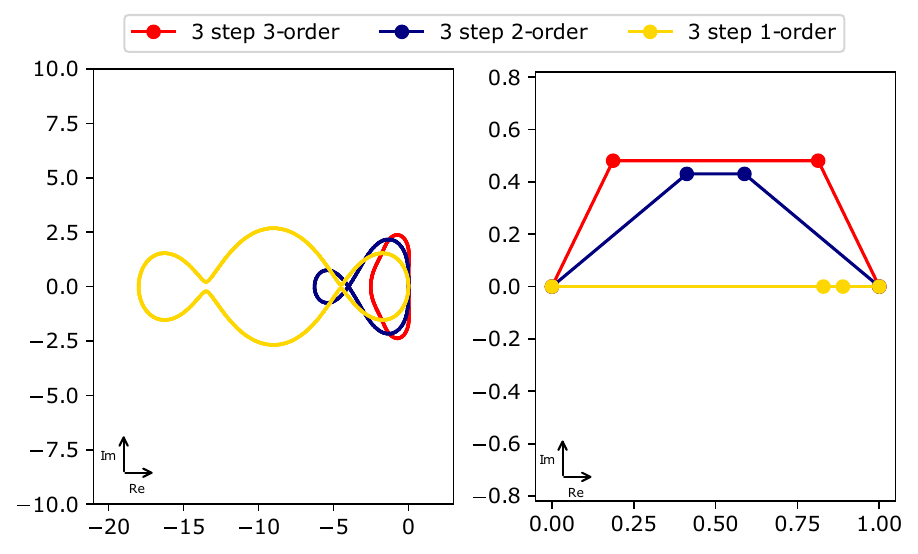}
\caption{By sacrificing accuracy, one can expand the stability region (left) using complex time steps, as demonstrated by these optimized 3-step complex Euler paths (right) with 3rd, 2nd, and 1st order accuracy.}
\label{4_2}
\end{centering}
\end{figure}

 To our knowledge, our work is the first to use complex time steps to create stability polynomials, enabling us to expand the stability region. Optimal stability polynomials with complex coefficients may have larger stability regions than those with purely real coefficients. Through numerical experimentation, we have found that complex-valued coefficients (in the stability polynomial) often result in stability regions that are not symmetric about the real axis. An example is shown in Fig \ref{4_3equation}. Asymmetric stability regions will likely be sub-optimal for real-valued problems, since the eigenvalues of real-valued problems appear as complex conjugate pair symmetric about the real axis and asymmetric regions cannot capture both pairs. This leads us to hypothesize that the optimal stability polynomials for real-valued problems likely have purely real coefficients.

However, for complex-valued problems which have asymmetric eigenvalue spectra, complex coefficients produce larger stability regions. Complex-valued problems often pop up in fields like quantum mechanics. In these settings, complex time steps require no additional cost and can often reduce computational effort by expanding stability regions. For example, the complex-valued Kohn-Sham equations which describe ultra-fast electron dynamics are known to require extremely small time steps for stability when using standard explicit Runge-Kutta methods \cite{kang2019pushing}. The search for efficient time integration techniques for complex-valued quantum systems that allow for large stable time steps, is an active field of research \cite{an2020quantum, an2022parallel, kononov2022electron}. In \cite{kononov2022electron}, a number of numerical methods are reviewed for the simulation of electron dynamics using the complex-valued Kohn-Sham equations and since implicit methods are far more expensive, they express a specific need for explicit methods with expanded stability regions.

Now, we demonstrate the expanded stability benefits of using complex coefficients by considering a classical complex-valued problem, the linear Schrodinger equation, whose eigenvalues lie on the negative imaginary axis. We wish to find the 2-stage/step first order integrator for the Schrodinger equation that allows for the maximal stable time step, i.e, we wish to find the $k$ in the stability polynomial below that covers the maximal length from the origin on the negative imaginary axis. The stability polynomial for a general 2-stage/step first order integrator is given by \ref{eq:schro_stab}. We are looking to find the optimal $k$ value that allows for the maximal stable time step.

\begin{align}
\label{eq:schro_stab}
    \Phi(z) = 1 + z + k z^2 
\end{align}

The optimal stability polynomial with purely real coefficients can be shown to be 

\begin{align}
    \Phi(z) = 1 + z +  z^2 
\end{align}

The optimal stability polynomial with complex coefficients can be shown to be 

\begin{align}
    \Phi(z) = 1 + z + \bigg(\frac{1}{2}-\frac{1}{2}i\bigg)z^2 
\end{align}

We see in Fig \ref{4_4equation} that the stability region using complex coefficients covers double the length on the negative imaginary axis as the one obtained using purely real coefficients. We show through this simple example complex steps/coefficients in a time integrator can expand stability regions beyond real equivalents and allow for larger time steps.

  \begin{figure}
\begin{centering}
\includegraphics[width=5in]{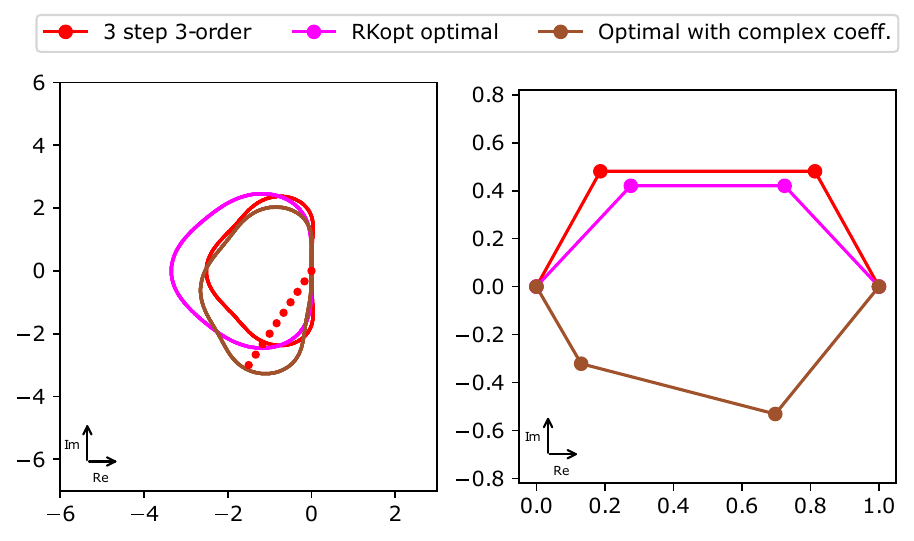}
\caption{ Complex coefficients in optimal stability polynomials often result in unsymmetrical stability regions. So, while the 3-step 2nd order complex integrator allows larger time steps along the red eigenvalues, the 3 stage 2nd order real integrator is still optimal if one has to take timesteps along both the red eigenvalues and its conjugate (as is the case for a real system). }
\label{4_3equation}
\end{centering}

\end{figure}

  \begin{figure}
\begin{centering}
\includegraphics[width=5in]{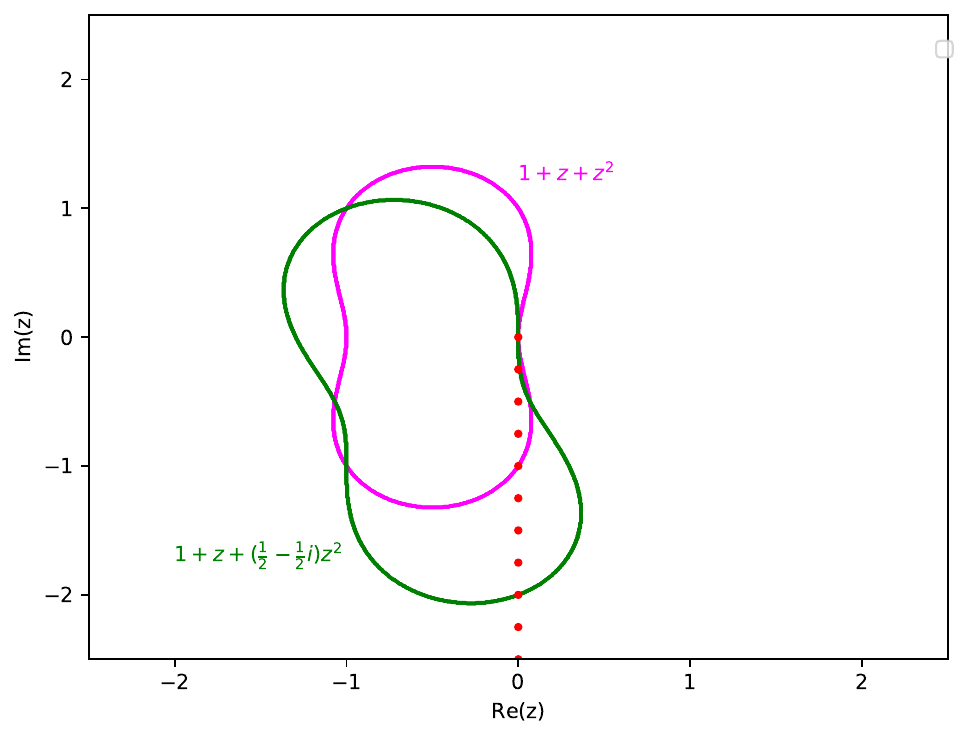}
\caption{ Constructing optimal stability polynomials using complex coefficients enables us to get larger stability regions than those using purely real coefficients. The optimal 2-stage/step 1st order stability polynomials for the linear Schrodinger equation using purely real and complex coefficients are shown here in magenta and green respectively. The possible timesteps for the linear Schrodinger equation along its eigenvalues are shown in red.}
\label{4_4equation}
\end{centering}

\end{figure}

\section{Discussion}
This paper is an exploration of the benefits and trade offs of leaving the real line to perform numerical  integration in the complex plane. Our goal was to explain the process of developing a complex time-stepper and its potential benefits in an accessible manner. We demonstrate how to systematically derive paths in the complex plane that increase the order of accuracy of any Runge-Kutta type integrator for general nonlinear differential equations. This process can be followed to similarly derive complex paths with other desired properties as well.

In Section \ref{subsec:nonlinearde}, we show that it is possible to get higher order accuracy by  taking complex time steps with a single stage method like the Euler method. An unmentioned advantage of this is that we only need to store one stage at every step. For example, third order Runge Kutta methods usually require 3 intermediate stages (3N registers) to stored at every step while the 3-step 3rd order Euler method only needs one stage of size 2N (because of the addition N imaginary values). Storage may not be much of an issue for simple systems but it becomes quite relevant in various practical settings like in \cite{KENNEDY2000177} where partial differential equations like the Navier Stokes equations are solved in 3 dimensions at high resolution. There are already low-storage algorithms for Runge-Kutta  schemes that use fewer registers \cite{carpenter1994fourth,KENNEDY2000177, ketcheson2008highly} but complex time integrators can provide an additional axis to create low-storage schemes with desired properties.

It is also possible to derive higher-order paths for specific differential equations. For example, the 2-step Forward Euler path with steps $1 -\frac{1}{\sqrt{2}} i$ and $\frac{1}{\sqrt{2}} i$ is 3rd order accurate for the differential equation $\dot{y} = - y^2$ with real initial conditions. There has been recent work\cite{guo2022personalized} using neural networks to obtain these uniquely accurate integrators for specific problems. Complex steps and coefficients in this context may reveal even better specialized integrators.

Complex time steppers enable us to circumvent the Runge-Kutta order barrier\cite{butcher2009fifth}. For real-valued differential equations, complex time steps and complex Runge-Kutta coefficients allow us to  increase the number of free parameters available to satisfy the order conditions and use fewer function evaluations to achieve the same order of  accuracy. Since the number of order conditions grow rapidly with higher order, we believe the best benefits lie at higher order. For example,  we demonstrated a Runge-Kutta-5 method with 5 function evaluations instead of the usual 6. However, an 8th order Runge-Kutta method traditionally requires least 11 function evaluations to obtain 8th order of accuracy. Complex timesteppers could reduce the number of evaluations down to 8. 

The disadvantage of complex timesteppers is the additional computational cost arising from complex operations. However, this cost is often negligible for complex-valued differential equations. Although there is no exact solution, we searched for complex parameters that satisfied the order conditions for 5th-order accuracy with 5 function evaluations. Since solutions to the order conditions with real parameters were a subset of solutions with complex parameters, we reasoned the complex parameters would give us more freedom to satisfy the order conditions. We were able to find coefficients that satisfied the order conditions to $2.1\times10^{-11}$ which may be very useful from a practical standpoint.

Lastly, we show that complex time stepping can be used to expand stability regions, allowing larger, stable time steps. Expanded stability regions have been designed using Runge-Kutta methods \cite{riha1972optimal, ketcheson2013optimal}, but we show that complex coefficients in the optimal stability polynomial allow for even larger stability regions, particularly for complex-valued systems. The fact that you can increase the stability region by changing the integration domain rather than the integration method may also be appealing in scenarios where particular integrators with specific properties (such as nonlinear stability) for specific differential equations are desired. Although we only explored linear stability analysis, it is possible that complex time integrators can similarly improve desired nonlinear stability properties (like Strong Stability Preserving properties \cite{gottlieb2001strong,gottlieb2009high}).

We intend this paper to be introduction to and initiation of the search for better time integrators that take advantage of paths in the complex plane. We have mostly focused on creating integrators with increased accuracy and stability, but there are many other desirable properties including energy and momentum preservation. The complex plane offers an extra dimension on which any integrator can be improved, opening up a new realm of possibilities when designing time stepping methods.

\section{Acknowledgements}

We would like to thank Emil Constantinescu, David Ketcheson, Alvin Bayliss and David Chopp for extensive feedback and valuable suggestions.

\bibliographystyle{unsrt}

\bibliography{references}
\end{document}